\newcommand{\polylog}{\text{\rm poly\!}\log}
\tikzset{black node/.style={draw, circle, fill = black, minimum size = 5pt, inner sep = 0pt}}
\tikzset{white node/.style={draw, circle, fill = white, minimum size = 5pt, inner sep = 0pt}}
\tikzset{normal/.style = {draw=none, fill = none}}
\newtheorem{theorem}{Theorem}
\newtheorem{lemma}{Lemma}
\newtheorem{proposition}{Proposition}
\newtheorem{observation}{Observation}
\newtheoremstyle{case}
  {3pt}
  {3pt}
  {\normalfont}
  {0pt}
  {\itshape}
  {:}
  { }
  {\thmnote{#3}}
\theoremstyle{case}
\theoremstyle{definition}
\newcommand{\N}{\mathbb{N}}
\newcommand{\intv}[2]{\left \{ #1,\dots, #2 \right \}}
\DeclareMathOperator{\mdelta}{{\mathrm{m}\delta}}
\DeclareMathOperator{\mdeg}{\mathrm{mdeg}}
\DeclareMathOperator{\mult}{\mathrm{\sf mult}}
\DeclareMathOperator{\tw}{\mathbf{tw}} 
\DeclareMathOperator{\tcw}{\mathbf{tcw}} 
\DeclareMathOperator{\tpw}{\mathbf{tpw}} 
\DeclareMathOperator{\cover}{\mathbf{cover}}
\DeclareMathOperator{\pack}{\mathbf{pack}}
\DeclareMathOperator{\desc}{desc}
\DeclareMathOperator{\adh}{adh}
\newcommand{\septhanks}{\ $^,$}
\title{\bf Packing and Covering Immersion Models\\ of Planar subcubic Graphs\thanks{Emails: \scriptsize
{\texttt{archontia.giannopoulou@gmail.com}},\,{\texttt{ojoungkwon@gmail.com}},\,{\texttt{jean-florent.raymond@mimuw.edu.pl}}, {\texttt{sedthilk@thilikos.info}}\,.}}
\date{}
\let\@fnsymbol\@alph\makeatother
\author{Archontia Giannopoulou\thanks{Institute of Informatics, University of Warsaw, Poland. }\septhanks\thanks{Supported by the Warsaw Center of Mathematics and Computer Science.} 
\and 
O-joung Kwon\thanks{Institute for Computer Science and Control, Hungarian Academy of Sciences, Hungary.}\septhanks\thanks{Supported by ERC Starting Grant PARAMTIGHT (No. 280152).}
\and 
Jean-Florent Raymond$^{\text{b},}$\thanks{AlGCo project team, CNRS, LIRMM, Montpellier,
    France.}\septhanks\thanks{Supported by the (Polish)
    National Science Centre grant PRELUDIUM 2013/11/N/ST6/02706.}
\and \\ Dimitrios M. Thilikos$^{\text{f,}}$\thanks{Department of Mathematics, National and Kapodistrian University of Athens, Greece.}
}
\begin{document}

\maketitle

\begin{abstract}
\noindent 
A graph $H$ is an immersion of a graph $G$ if $H$ can be obtained by some subgraph 
$G$ after lifting incident  edges.
We prove that there is a polynomial function $f:\N\times\N\rightarrow\N$, such that 
if $H$ is a connected planar subcubic graph on $h>0$ edges,
$G$ is a graph, and $k$ is a non-negative integer, then  either $G$ contains $k$ vertex/edge-disjoint subgraphs, 
each containing $H$ as an immersion, or $G$ contains a set $F$ of $f(k,h)$ vertices/edges 
such that $G\setminus F$  does  not contain $H$ as an immersion.
\end{abstract}

\noindent {\bf keywords:} Erdős–Pósa properties, Graph immersions, Packings and coverings in graphs

\section{Introduction}


All graphs is this paper are finite, undirected, loopless, and may have multiedges. 
Let ${\cal C}$ be a class of graphs. 
An {\em ${\cal C}$-vertex/edge cover} of $G$ is a set $S$ of vertices/edges such that 
each subgraph of $G$ that is isomorphic to a graph in ${\cal C}$ 
contains some element of $S$.
A {\em ${\cal C}$-vertex/edge packing} of $G$ is a collection of vertex/edge-disjoint subgraphs of $G$, each isomorphic to some graph in ${\cal C}$.

We say that a graph class ${\cal C}$ has the {\em vertex/edge Erdős–Pósa property}  
 (shortly {\em {\sf v/e}-E{\sf \footnotesize  \&}P property}) for some graph class ${\cal G}$ if
there is a function $f:\N \rightarrow \N$, called a {\em gap function},
such that, for every graph $G$ in ${\cal G}$ and every 
non-negative integer $k$,  either $G$ has a vertex/edge ${\cal C}$-packing of size $k$
or $G$ has a  vertex/edge ${\cal C}$-cover of size $f(k)$. 
In the case where ${\cal G}$ is the class of all graphs
we simply say that ${\cal C}$ has the {\sf v/e}-E{\sf \footnotesize  \&}P property.
An interesting topic in  Graph Theory, related to the notion of duality between graph parameters, 
is to detect  instantiations 
of ${\cal C}$ and ${\cal G}$ such that ${\cal C}$ has the  {\sf v/e}-E{\sf \footnotesize  \&}P property  for ${\cal G}$
and, if yes, optimize the corresponding gap. Certainly,  the first result of this type was 
the celebrated result of Erdős and Pósa in~\cite{ErdosP65} who proved that the class of all cycles has the 
{\sf v}-E{\sf \footnotesize  \&}P property with  gap function  $O(k\cdot \log k)$. This result have triggered 
a lot of research on its possible extensions. One of the most general ones  was given 
in~\cite{RobertsonS86GMV} where its was proven 
that the class of graphs that are contractible to some graph $H$
have the {\sf v}-E{\sf \footnotesize  \&}P property iff $H$ is planar  (see also~\cite{Diestel05grap,ChekuriC13larg,ChekuriC13poly} for 
improvements on the gap function).

Other  instantiations of ${\cal C}$  for which  the {\sf v}-E{\sf \footnotesize  \&}P property
has been proved concern 
 odd cycles~\cite{KawarabayashiN2007erdo,RautenbachR01thee}, long cycles~\cite{BirmeleBR07erdo}, and graphs containing cliques as minors~\cite{DiestelKW12thee} (see also~\cite{ReedRST96pack,KakimuraKK12erdo,GeelenK09thee} for results on more general combinatorial structures).  
 
As noticed in~\cite{Diestel05grap}, cycles 
  have  the {\sf e}-E{\sf \footnotesize  \&}P property as well. Interestingly,  
 only few more results exist for the  cases where the  {\sf e}-E{\sf \footnotesize  \&}P property
 is satisfied. It is known for instance that graphs contractible
 to $\theta_{r}$ (i.e. the graph consisting of two vertices and an edge of multiplicity $r$ between them) have the {\sf e}-E{\sf \footnotesize  \&}P property~\cite{ChatzidimitriouRST15mino}. Moreover it was proven that odd cycles 
have the  {\sf e}-E{\sf \footnotesize  \&}P property for planar graphs~\cite{KralV04edge}
and for   4-edge-connected
graphs \cite{KawarabayashiN2007erdo}.

Given two graphs $G$ and $H$, we say that $H$ is an {\em immersion}
of $G$ if $H$ can be obtained from some subgraph of $G$ by lifting 
 incident edges (see Section~\ref{d0our} for the definition of the lift operation). Given a 
graph $H$, we denote by ${\cal I}(H)$
the set of all graphs that contain $H$ as an immersion. 
Using this terminology, the edge variant of the original
 result of  Erdős and Pósa in~\cite{ErdosP65} implies that the class ${\cal I}(\theta_{2})$ has the  {\sf v}-E{\sf \footnotesize  \&}P  property (and, according to~\cite{Diestel05grap}, the 
 {\sf e}-E{\sf \footnotesize  \&}P  property as well).
A natural question is whether this can be extended 
for  ${\cal I}(H)$, for other $H$'s, different than
${\theta_{2}}$. This is the question that we consider in this paper.
A distinct line of research is to identify the graph classes
$\mathcal{G}$ such that for every graph $H$, ${\cal I}(H)$
has the {\sf e}-E{\sf \footnotesize  \&}P property for~$\mathcal{G}$.
In this direction, it was recently proved in~\cite{Liu15pack} that for every graph $H$, ${\cal I}(H)$
has the {\sf e}-E{\sf \footnotesize  \&}P property for 4-edge-connected graphs.
%
%
%
%

In this paper we show that if $H$ is non-trivial (i.e., has at least one edge),
connected, planar,  
and subcubic, i.e., each vertex is incident to at most 3 edges, 
then ${\cal I}(H)$ has the {\sf v/e}-E{\sf \footnotesize  \&}P property (with polynomial gap
in both cases).
More concretely, our main result is the following.

\begin{theorem}
\label{main4u}
Let $H$ be a connected planar subcubic graph of   $h>0$ edges, let $k\in\N$, and let $G$ be a graph without any ${\cal I}(H)$-vertex/edge packing of size greater than $k$. Then $G$ has a  ${\cal I}(H)$-vertex/edge cover of size bounded by a polynomial function of $h$ and $k$.
\end{theorem}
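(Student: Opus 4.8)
The plan is to treat the two variants in parallel, using for each the width parameter suited to the relevant containment relation: treewidth $\tw$ for the vertex variant and tree-cut width $\tcw$ for the edge variant. Both parameters are monotone under the corresponding relation (topological minors, resp.\ immersions), and both admit a grid-type theorem: there is a polynomial $g$ such that every graph with $\tw(G)\ge g(r)$ contains the $r\times r$ wall $W_r$ as a topological minor, and (with a polynomial bound, which is exactly what keeps the final gap polynomial) every graph with $\tcw(G)\ge g(r)$ contains $W_r$ as an immersion. The third ingredient is an observation about walls: since $H$ is connected, planar and subcubic, $H$ is a topological minor --- hence an immersion --- of $W_{ch}$ for an absolute constant $c$ (planar subcubic graphs are minors of walls of linearly bounded size, and for subcubic graphs ``minor'' and ``topological minor'' coincide); moreover $W_m$ contains $\lfloor m/r\rfloor^2$ pairwise vertex-disjoint copies of $W_r$. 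Combining the last two facts, $W_{ch\lceil\sqrt k\,\rceil}$ contains $k$ pairwise vertex-disjoint copies of $H$, witnessed by models that are vertex-disjoint (viewing $H$ as a topological minor) and in particular edge-disjoint (viewing $H$ as an immersion).

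The argument then splits on the value of the width parameter, with threshold $p(k,h):=g(ch\lceil\sqrt k\,\rceil)$, which is a polynomial in $h$ and $k$. If $\tw(G)\ge p(k,h)$, then $G$ has $W_{ch\lceil\sqrt k\,\rceil}$ as a topological minor, hence $G$ contains $k$ pairwise vertex-disjoint $H$-immersions, i.e.\ an $\mathcal I(H)$-vertex-packing of size $k$, so the empty set is the required cover. Symmetrically, if $\tcw(G)\ge p(k,h)$, then $G$ has $W_{ch\lceil\sqrt k\,\rceil}$ as an immersion, and the pairwise edge-disjoint models of its $k$ sub-walls form an $\mathcal I(H)$-edge-packing of size $k$.

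It remains to treat $\tw(G)<p(k,h)$ (resp.\ $\tcw(G)<p(k,h)$). Fix a rooted tree decomposition $(T,\{B_t\}_{t\in V(T)})$ of $G$ of width $w<p(k,h)$. We prove by induction on $k$ that if $G$ has no $\mathcal I(H)$-vertex-packing of size $k+1$ then $G$ has an $\mathcal I(H)$-vertex-cover of size at most $(w+1)k$. If $G$ has no $H$-immersion the empty cover works; otherwise pick a deepest node $t$ of $T$ such that $G_t:=G\big[\bigcup_{t'\preceq t}B_{t'}\big]$ contains an $H$-immersion. Each component of $G_t-B_t$ is contained in $G_{t'}$ for some child $t'$ of $t$, which by the choice of $t$ has no $H$-immersion; since $H$ is connected, $B_t$ therefore meets every $H$-immersion of $G_t$, and, as $B_t$ separates $V(G_t)\setminus B_t$ from $V(G)\setminus V(G_t)$, it meets every $H$-immersion of $G$ that uses a vertex of $G_t$. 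Now $G-V(G_t)$ has no $\mathcal I(H)$-vertex-packing of size $k$ (any such packing extends by the $H$-immersion living inside $G_t$), so by induction it has an $\mathcal I(H)$-vertex-cover $Z'$ with $|Z'|\le(w+1)(k-1)$, and $Z'\cup B_t$ is an $\mathcal I(H)$-vertex-cover of $G$ of size at most $(w+1)k$; since $w<p(k,h)$ this is polynomial in $h$ and $k$. For the edge variant one runs the same recursion on a tree-cut decomposition of width $w<p(k,h)$, the role of the bag $B_t$ being played by an edge set $F_t$ comprising the edges of the cut separating $V(G_t)$ from the rest of $G$ together with the bounded-size edge set read off from the $3$-center of the torso at $t$ (this last part kills the $H$-immersions that live strictly inside $G_t$ but cross between distinct child-branches of $t$); since only boundedly many child-branches carry adhesion at least $3$, one gets $|F_t|\le\mathrm{poly}(w)$, and the induction yields an $\mathcal I(H)$-edge-cover of size $\mathrm{poly}(w)\cdot k=\mathrm{poly}(h,k)$.

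I expect the edge variant of this last step to be the main obstacle. Unlike a tree decomposition, a tree-cut decomposition of bounded width may have separations of unbounded order --- its adhesion is not controlled by its width --- so one cannot simply remove the edge-cut above $t$ and hope for a bounded set; the argument must use the structure forced by the $3$-center of the torso (only boundedly many ``heavy'' child-branches, the ``light'' ones being essentially suppressible) to carve out a part of $G_t$ whose interface with the rest of $G$ is genuinely bounded, and to do so while keeping the recursion's bookkeeping polynomial. A secondary, purely quantitative point is to choose the wall sizes and invoke the two grid theorems so that the threshold polynomial $p(k,h)$ and the linear-in-$k$ cover bounds multiply out to a single polynomial in $h$ and $k$ in both variants --- in particular, this is the place where a \emph{polynomial} bound in the grid theorem for $\tcw$ is required.
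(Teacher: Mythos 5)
Your overall architecture matches the paper's: dichotomize on a width parameter (treewidth for vertices, tree-cut width for edges), extract a wall on the large side to contradict the packing hypothesis, and recurse on the decomposition on the small side. Your vertex variant is in effect a self-contained proof of Thomassen's bounded-treewidth Erd\H{o}s--P\'osa theorem, which the paper simply cites as Proposition~\ref{ropjkg}; modulo a small slip (finding a packing of size $k$ does not make the empty set a cover --- the correct logic is that a wall of height about $h\sqrt{k+1}$ would yield a packing of size $k+1$, contradicting the hypothesis and forcing the treewidth to be small), it is sound.

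The edge variant, which you correctly flag as the crux, does not close as sketched, and the gap is structural rather than quantitative. Recursing directly on a tree-cut decomposition and removing the adhesion at a deepest infected node $t$ together with something read off the $3$-center of the torso fails because $X_t$ does not separate the children's subtrees $G_{t'}$ from each other or from the rest of $G_t$: edges may cross $X_t$, bold siblings may be joined, and --- decisively --- an $H$-immersion expansion living in $G_t$ can enter unboundedly many \emph{thin} children, each through its adhesion of size at most $2$. The $3$-center controls only the (boundedly many) bold children, so there is no obviously bounded edge set $F_t$ whose removal guarantees that every surviving $H$-immersion expansion is edge-disjoint from a fixed one, which is what the downward induction needs. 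The paper does not try to fix this on the tree-cut decomposition. Instead, Section~\ref{sec:tc2tp} builds $G^*$ by attaching a small degree-boosting gadget per vertex and per incident edge, takes a \emph{nice} tree-cut decomposition of $G^*$, and subdivides every edge that crosses a bag; the result is a graph $G'$ admitting a genuine \emph{tree-partition} of width $\leq(\tcw(G)+1)^2/2$ (Theorem~\ref{gk0opo}), while Lemmata~\ref{subdequal}, \ref{pabound}, \ref{cobound} transfer $H^+$-packings and $H^+$-covers of $G'$ to $H$-packings and $H$-covers of $G$ in the needed directions (the gadgets make $H^+$ have minimum degree $3$, which is what pins branch vertices to original vertices after subdivision). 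On a tree-partition the bag \emph{is} a separator of the children's subtrees, Lemma~\ref{intersect} bounds how many children an immersion expansion can meet, and the recursion of Theorem~\ref{o0p2kz9} removes a set of size $\omega_H(r)$ per step. That tree-cut-to-tree-partition reduction, together with the $G^*$ gadgetry that makes it compatible with packings and covers, is the missing ingredient in your proposal.
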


The main tools of our proof are the graph invariants of tree-cut width and 
tree-partititon width, defined in~\cite{Wollan15} and~\cite{DingO96ontr} 
respectively (see Section~\ref{d0our}
for the formal definitions). Our proof uses the fact that every graph
of polynomially (on $k$) big tree-cut width contains a
wall of height $k$ as an immersion (as proved in~\cite{Wollan15}).
This permits us to consider only graphs of bounded  tree-cut width
and, by applying suitable reductions, we finally  reduce the problem to graphs of bounded tree partition
width (Theorem~\ref{gk0opo}). The result follows as we next prove that for every $H$, the class 
 ${\cal I}(H)$ has  the {\sf e}-E{\sf \footnotesize  \&}P property for graphs of bounded tree-partition width (Theorem~\ref{o0p2kz9}).

One might conjecture that the result in~Theorem~\ref{main4u} is tight in the sense 
that both being planar and subcubic are necessary for $H$ in order  ${\cal I}(H)$ to have
the {\sf e}-E{\sf \footnotesize  \&}P property. In this direction, in~Section~\ref{d0opo45iuyt},
we give counterexamples for the cases where $H$ is planar 
but not subcubic and is subcubic but not planar.

\section{Definitions and preliminary results}
\label{d0our}

We use $\N^{+}$ for the set of all positive integers and we set $\N=\N^{+}\cup\{0\}$. 
Given a function $f:A\rightarrow B$ and a set $C\subseteq A$, we denote by $f|_{C}=\{(x,f(x)) \mid x\in C\}$.

\paragraph{\bf Graphs.}

As already mentioned, we deal with loopless graphs where multiedges are allowed. Given a graph $G$, we denote by
$V(G)$ its set of vertices and by $E(G)$ its multiset of edges. The
notation $|E(G)|$ stands for the total number of edges, that is,
counting multiplicities. 
We use the term \emph{multiedge} to refer to a
2-element set of adjacent vertices and the term \emph{edge} to deal
with one particular instanciation of the multiedge
connecting two vertices. The function $\mult_G$ maps a set of two
vertices of $G$ to the multiplicity of the edge connecting them,
or zero if they are not adjacent.
If $\mult_G(\{u,v\}) = k$ for some  $k\in \N^{+}$, we denote by $\{u,v\}_1,
\dots, \{u,v\}_{k}$ the distinct edges connecting
$u$ and~$v$. For the sake of clarity, we identify a multiedge of
multiplicity one and its edge and write $\{u,v\}$ instead of
$\{u,v\}_1$ when~$\mult_G(\{u,v\}) = 1$.

We denote by $\deg_G(v)$ the degree of a
vertex $v$ in a graph $G$, that is, the number of vertices that are
adjacent to~$v$. The multidegree of $v$, that we write $\mdeg_G(v)$,
is the number of edges (i.e.\ counting multiplicities) incident with~$v$. We drop the subscript when it is clear from the
context.

%
%

\paragraph{\bf Immersions.}
Let $H$ and $G$ be graphs. We say that $G$ contains $H$ as an
\emph{immersion} if there is a pair of functions $(\phi, \psi)$, called
an $H$-\emph{immersion model}, such that $\phi$ is an injection of $V(H) \to V(G)$ and
$\psi$ sends $\{u,v\}_i$ to a path of $G$ between
$\phi(u)$ and $\phi(v)$, for every $\{u,v\}\in E(H)$ and every $i \in
\intv{1}{\mult_H(\{u,v\})}$, in a way such that
distinct edges are sent to edge-disjoint paths. Every vertex in the image of $\phi$
is called a \emph{branch vertex}. We will make use of the following 
easy observation.

\begin{observation}\label{branches}
  Let $H$ and $G$ be two graphs, and let $(\phi, \psi)$ be an
  $H$-immersion model in $G$. Then for every vertex $x$ of $G$, we have
  $\mdeg_H(x) \leq \mdeg_G(\phi(x))$.
\end{observation}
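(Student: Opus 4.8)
The plan is to exhibit an injection from the multiset of edges of $H$ incident with $x$ into the multiset of edges of $G$ incident with $\phi(x)$; producing such an injection is exactly what the inequality $\mdeg_H(x)\le\mdeg_G(\phi(x))$ asserts.

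First I would fix a vertex $x$ of $H$ and consider an arbitrary edge $e$ of $H$ incident with $x$, say $e=\{x,y\}_i$ for some $y\in V(H)$ and some $i\in\intv{1}{\mult_H(\{x,y\})}$. Since $\phi$ is an injection and $H$ is loopless we have $\phi(x)\ne\phi(y)$, so the path $\psi(e)$ joining $\phi(x)$ to $\phi(y)$ is non-trivial; being a path, it visits $\phi(x)$ only at that endpoint. Hence $\psi(e)$ has a well-defined first edge $g(e)$ when it is traversed starting from $\phi(x)$, and $g(e)$ is an edge of $G$ incident with $\phi(x)$. This defines a map $g$ from the edges of $H$ incident with $x$ to the edges of $G$ incident with $\phi(x)$.

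It then remains to check that $g$ is injective. If $e$ and $e'$ are two distinct edges of $H$ incident with $x$, then by the definition of an $H$-immersion model the paths $\psi(e)$ and $\psi(e')$ are edge-disjoint, so in particular their respective first edges satisfy $g(e)\ne g(e')$. Thus $g$ is an injection of the desired kind and the observation follows. There is no genuine obstacle here: the only point deserving a moment's attention is that a path between two distinct vertices is non-trivial and meets its endpoints only once, which is precisely what guarantees that $g(e)$ is well defined and incident with $\phi(x)$.
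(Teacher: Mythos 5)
Your proof is correct: mapping each edge of $H$ incident with $x$ to the first edge of its certifying path at $\phi(x)$ and using edge-disjointness of these paths is exactly the standard argument, which the paper leaves implicit by stating this as an easy observation without proof. Note only that the statement's ``vertex $x$ of $G$'' is evidently a slip for ``vertex $x$ of $H$'', which you interpreted correctly.
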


An \emph{$H$-immersion expansion} $M$ in a graph $G$ is a subgraph of $G$ defined as
follows: $V(M) = \phi(V(H)) \cup \bigcup_{e \in H} V(\psi(e))$ and $E(M) = \bigcup_{e \in H}E(\psi(e))$
for some $H$-immersion model $(\phi,\psi)$ of~$G$.
We call the paths in $\psi(E(H))$ {\em certifying paths} of the $H$-immersion expansion $M$.

We say that two edges are {\em incident} if they share some
endpoint. A \emph{lift} of two incident edges
$e_1=\{x,y\}$ and $e_2=\{y,z\}$ of $G$ is the operation that removes the
edges $e_{1}$ and $e_{2}$ from the  
graph and then,  if $x\neq z$,  adds the edge $\{y,z\}$ (or increases the multiplicity of $\{y,z\}$ 
if this edge already exists). Notice that $H$ is an immersion of $G$ if and only if a graph isomorphic to $H$ 
can be obtained from some subgraph of $G$ after applying lifts of incident edges%
\footnote{While we mentioned 
this definition in the introduction, we now adopt the more technical definition
of immersion in terms of immersion models as this will facilitate the presentation of the proofs.}.



The \emph{dissolution} of a vertex of degree two of a graph
is the operation of adding an edge joining
its endpoints and then deleting this vertex.

\paragraph{\bf Packings and coverings.}
An $H$-cover of $G$ is a set $C \subseteq E(G)$ such that
$G \setminus C$ does not contain $H$ as an immersion.
An $H$-packing in $G$ is a collection of edge-disjoint $H$-immersion expansions
in $G$.
We denote by $\pack_H(G)$ the maximum size of an $H$-packing
and by $\cover_H(G)$ the minimum size of an $H$-covering in~$G$.

\paragraph{\bf Rooted trees.}
A \emph{rooted tree} is a pair $(T,s)$ where $T$ is a tree and $s \in
V(T)$ is a vertex referred to as the \emph{root}. Given a vertex $x\in V(T)$, the
\emph{descendants} of $x$ in $(T,s)$,
denoted by $\desc_{(T,s)}(x)$, is the set containing each vertex
$w$ such that the unique path from $w$ to $s$ in $T$ contains $x$.
If $y$ is a descendant of $x$ and is adjacent to $x$, then it is a
\emph{child} of~$x$.
Two vertices of $T$ are \emph{siblings} if they are children of the
same vertex.
 Given a
rooted tree $(T,s)$ and a vertex $x\in V(G)$, the \emph{height} of $x$
in $(T,s)$ is the maximum distance between $x$ and a vertex in $\desc_{(T,s)}(x)$.

We now define two types of decompositions of graphs: tree-partitions
(cf. \cite{Seese, Halin1991203})
and tree-cut decompositions (cf.\ \cite{Wollan15}).

\paragraph{\bf Tree-partitions.}
We introduce, especially for the needs of our proof,
a multigraph extension of the parameter of tree-partition width defined in~\cite{Seese, Halin1991203}
where we could consider the number of edges between the bags and the number of vertices in the bags.
A \emph{tree-partition} of a graph $G$ is a pair ${\cal D}=(T, {\cal X})$
where $T$ is a tree and  ${\cal X}=\{X_t\}_{t\in V(T)}$ is a
partition of $V(G)$ such that either $|V(T)|=1$ or for every $\{x,y\}\in
E(G)$, there exists an edge $\{t,t'\}\in E(T)$ where
$\{x,y\}\subseteq X_{t}\cup X_{t'}$. We call the elements of $\mathcal{X}$ {\em bags} of ${\cal D}$. Given an edge $f=\{t,t'\}\in
E(T)$, we define $E_{f}$ as the set of edges with one endpoint in
$X_{t}$ and the other in $X_{t'}$. 
The {\em width} of ${\cal D}$ is
defined as $\max\{|X_{t}|\}_{t\in V(T)}\cup\{|E_{f}|\}_{f\in E(T)}.$
The \emph{tree-partition width} of $G$ is 
the minimum width over all  tree-partitions of $G$ and will be denoted by $\tpw(G)$. 
A \emph{rooted tree-partition} of a graph $G$ is a 
triple ${\cal D}=((T,s), {\cal X})$ where $(T,s)$ is a rooted tree and 
$({\cal X},T)$ is a tree-partition of $G$.
\paragraph{\bf Tree-cut decompositions.}

A \emph{near-partition} of a set $S$ is a collection of pairwise
disjoint subsets $S_1,\dots, S_k \subseteq S$ (for some $k \in \N$)
such that $\bigcup_{i=1}^k S_i = S$. Observe that this definition allows
a set of the familly to be empty.
A \emph{tree-cut decomposition} of a graph $G$ is a pair ${\cal D}=(T, {\cal X})$
where $T$ is a  tree and ${\cal X}=\{X_t\}_{t\in V(T)}$ is a
near-partition of $V(G)$. 
As in the case of
tree-partitions, we call the elements of $\mathcal{X}$ {\em bags} of ${\cal D}$.
A \emph{rooted tree-cut decomposition} of a graph $G$ is a 
triple ${\cal D}=((T,s), {\cal X})$ where $(T,s)$ is a rooted tree and 
$({\cal X},T)$ is a tree-cut decomposition of $G$.
Given that ${\cal D}=((T,s), {\cal X})$ is a rooted tree partition or a rooted tree-cut decomposition of $G$ 
and given  $t \in
V(T)$, 
we set $G_t = G\left
    [ \bigcup_{t \in \desc_{(T,s)}(t)}X_t\right ].$ 

The \emph{torso} of a tree-cut decomposition $(T, \mathcal{X})$ at a node $t$ is the graph obtained from $G$ as follows. 
If $V(T)=\{t\}$, then the torso at $t$ is $G$. Otherwise let $T_1, \ldots, T_{\ell}$ be the connected components of $T\setminus t$. 
The torso $H_t$ at $t$ is obtained from $G$ by \emph{consolidating} each vertex set $\bigcup_{b\in V(T_i)} X_b$ into a single vertex $z_i$. The operation of consolidating a vertex set $Z$ into $z$ is to replace $Z$ with $z$ in $G$, and for each edge $e$ between $Z$ and $v\in V (G)\setminus Z$, adding an edge $zv$ in the new graph. 
Given a graph $G$ and $X\subseteq  V(G)$, let the \emph{3-center} of $(G, X)$ be the unique graph obtained from $G$ by suppressing vertices in $V (G) \setminus  X$ of degree two and deleting vertices of degree $1$. For each node $t$ of $T$, we denote by $\widetilde{H_t}$ the \emph{3-center} of $(H_t,X_t)$, where $H_t$ is the torso of $(T, \mathcal{X})$ at $t$.

Let ${\cal D}=((T,s), {\cal X})$ be a rooted tree-cut decomposition of~$G$.
The adhesion of a vertex $t$ of $T$, that we write
$\adh_\mathcal{D}(t)$, is the number of edges with exactly one
endpoint in~$G_t$. 
 The \emph{width} of a tree-cut decomposition $(\mathcal{X}, T)$ of $G$ is $\max_{t\in V (T)} \{\adh_\mathcal{D}(t),|\widetilde{H_t}|\}$. 
 The \emph{tree-cut width} of $G$, denoted by $\tcw(G)$, is the minimum width over all tree-cut decompositions of $G$.

A vertex $t\in V(T)$ is {\em thin} if $\adh_\mathcal{D}(t)\leq 2$, and {\em bold} otherwise.
We also say that ${\cal D}$
is \emph{nice} if for every thin vertex $t \in V(T)$
we have $N(V(G_t)) \cap \bigcup_{b\ \text{\rm is a sibling of}\ t}V(G_b)
  = \emptyset.$
In other words, there is no edge from a vertex of $G_t$ to a vertex of
$G_b$, for any sibling $b$ of~$t$, whenever $t$ is thin.
The notion of nice tree-cut decompositions has been introdued by
Ganian et al.\ in~\cite{ganian2015}. Furthermore, they proved the
following result.
\begin{proposition}[\cite{ganian2015}]\label{ganian}
  Every rooted tree-cut decomposition can be transformed into a nice one
  without increasing the width.
\end{proposition}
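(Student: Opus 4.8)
The plan is to repair violations of niceness one at a time by a local surgery on the tree, and to argue both that this surgery never increases the width and that it can be applied only finitely often. Concretely, suppose $\mathcal{D}=((T,s),\mathcal{X})$ is not nice: there is a thin vertex $t\in V(T)$, a sibling $b$ of $t$, and an edge of $G$ with one endpoint in $V(G_t)$ and the other in $V(G_b)$; let $p$ be the common parent of $t$ and $b$. I would \emph{reattach the subtree $T_t$ below $b$}: delete the edge $\{p,t\}$ from $T$, add the edge $\{b,t\}$, and keep $\mathcal{X}$ and the root $s$ unchanged. Since $b$ is a sibling of $t$ it is distinct from $t$ and is not a descendant of $t$, so the result is again a rooted tree, hence a rooted tree-cut decomposition $\mathcal{D}'$ of the \emph{same} graph $G$. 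The only node whose ``below'' subgraph changes is $b$, for which now $V(G'_b)=V(G_b)\cup V(G_t)$, whereas $G'_x=G_x$ for every $x\neq b$.

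For termination, observe that since $b$ stays a child of $p$, the move increases the distance to $s$ of every vertex of $T_t$ by exactly one and changes no other such distance; hence the potential $\Phi(\mathcal{D})=\sum_{x\in V(T)}\mathrm{dist}_{T}(s,x)$ strictly increases, by $|V(T_t)|\ge 1$, at each application, while $|V(T)|$ is invariant. As $\Phi\le |V(T)|^{2}$, after finitely many moves we reach a decomposition to which the move no longer applies, i.e.\ a nice one, still of $G$ and still rooted at $s$.

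It remains to check that neither the adhesions nor the sizes $|\widetilde{H_x}|$ grow. Only $\adh_{\mathcal{D}}(b)$ can change, and writing $e$ for the number of edges of $G$ between $V(G_t)$ and $V(G_b)$, a direct count of the edges leaving $V(G_b)\cup V(G_t)$ gives $\adh_{\mathcal{D}'}(b)=\adh_{\mathcal{D}}(b)+\adh_{\mathcal{D}}(t)-2e\le\adh_{\mathcal{D}}(b)$, using $\adh_{\mathcal{D}}(t)\le 2$ (thinness) and $e\ge 1$. For torsos, the torso at a node $x$ depends only on the partition of $V(G)\setminus X_x$ into the vertex sets of the components of $T\setminus x$, and this partition is unchanged unless $x\in\{b,p\}$. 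At $p$, the move merges the two components carrying $V(G_t)$ and $V(G_b)$, so $H'_p$ is $H_p$ with the consolidation vertex $z_t$ of $V(G_t)$ identified with the consolidation vertex $z_b$ of $V(G_b)$; since $t$ is thin, $z_t$ has at most two incident edges in $H_p$ and at least one of them goes to $z_b$, so this identification has exactly the same effect as suppressing $z_t$, whence $\widetilde{H'_p}\cong\widetilde{H_p}$. At $b$, the root-carrying component of $T\setminus b$ gets split into $V(G_t)$ and the remainder, so $H'_b$ is obtained from $H_b$ by splitting its single outer consolidation vertex into $z_t$ — again with at most two incident edges, by thinness of $t$ — and a vertex $z_{\mathrm{out}}$, plus possibly a few edges between them. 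Because $z_t\notin X_b$ has degree at most two it is destroyed by the $3$-center operation, and a short case analysis according to where $z_t$'s (at most two) edges land then shows $\widetilde{H'_b}$ is obtained from $\widetilde{H_b}$ using only edge deletions and vertex suppressions, so $|\widetilde{H'_b}|\le|\widetilde{H_b}|$.

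The delicate step is this last one: the subgraph below $b$ genuinely grows, so a priori $|\widetilde{H_b}|$ could increase. The point that makes it work is that the only ``new'' consolidation vertex appearing in $H'_b$, namely $z_t$, inherits the adhesion of the thin node $t$ and therefore has degree at most two, so it is invisible to the $3$-center; all remaining changes to $H_b$ are either removals of edges of $G$ that cease to leave $V(G_t)$, or a harmless relocation of at most two edge-endpoints — neither of which can enlarge the $3$-center. I expect the bookkeeping of the subcases in that argument (according to whether $z_t$'s edges go to $X_b$, to a sibling's consolidation vertex, or to $z_{\mathrm{out}}$) to be the only real work; everything else is essentially the two observations that the move fixes one violation while the potential $\Phi$ strictly increases.
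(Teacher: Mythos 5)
The paper does not prove Proposition~\ref{ganian}; it cites it from Ganian, Kim, and Szeider~\cite{ganian2015}. Your proposal reconstructs, essentially exactly, the proof given there: the same local move (detach a thin node $t$ that has a bad edge into a sibling's subtree $G_b$ and reattach $T_t$ as a child of $b$), the same adhesion computation $\adh_{\mathcal{D}'}(b)=\adh_{\mathcal{D}}(b)+\adh_{\mathcal{D}}(t)-2e\leq\adh_{\mathcal{D}}(b)$ driven by thinness of~$t$ and $e\geq 1$, the same observation that only the torsos at $p$ and $b$ are affected, and the same kind of case analysis showing that the extra consolidation vertex $z_t$ — carrying multidegree $\adh(t)\leq 2$ — is eliminated by the 3-center, so the sizes $|\widetilde{H_p}|$ and $|\widetilde{H_b}|$ do not grow. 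Your termination argument via the potential $\Phi(\mathcal{D})=\sum_{x\in V(T)}\mathrm{dist}_T(s,x)$ is a clean choice (the cited paper uses a slightly different but equivalent accounting); it works because $|V(T)|$ is invariant, every node of $T_t$ moves one step farther from $s$, and no other distance changes, so $\Phi$ strictly increases while staying bounded by $|V(T)|^2$. The only place you leave genuine work implicit is the "short case analysis" for $\widetilde{H'_b}$, but the claim is correct: in each of the cases $\adh(t)=1$, $\adh(t)=2$ with $e=2$, and $\adh(t)=2$ with $e=1$, removing $z_t$ via deletion or suppression turns $H'_b$ into $H_b$ (with $z$ renamed) minus some edges, and the 3-center is monotone under edge deletion. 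So the proposal is correct and follows the same route as the cited proof.
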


We say than an edge of $G$ \emph{crosses} the bag $X_t$, for some $t
\in V(T)$ if its endpoints belongs to bags $X_{t_1}$ and $X_{t_2}$,
for some $t_1, t_2\in V(T)$ such that $t$ belongs to the interior of
the (unique) path of $T$ connecting $t_1$ to~$t_2$.

\section{From tree-cut decompositions to tree-partitions}
\label{sec:tc2tp}

The purpose of this section is to prove the following theorem.
\begin{theorem}
\label{gk0opo}
  For every connected graph
   $G$, and every connected graph $H$ with  at least one edge,
  there is a graph $G'$ such that
  \begin{itemize}
  \item $\tpw(G') \leq (\tcw(G)+1)^{2}/2$, 
  \item $\pack_{H^+}(G') \leq \pack_H(G)$, and 
  \item $\cover_{H}(G) \leq \cover_{H^+}(G')$.
  \end{itemize}
\end{theorem}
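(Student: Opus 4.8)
The plan is to build $G'$ from a \emph{nice} tree-cut decomposition of $G$ --- which exists by Proposition~\ref{ganian} --- by local modifications that turn the decomposition into a tree-partition. Fix a nice tree-cut decomposition ${\cal D}=((T,s),{\cal X})$ of $G$ of width $w=\tcw(G)$. Two observations shape the construction. First, since $\widetilde{H_t}$ is obtained from the torso $H_t$ without deleting or suppressing any vertex of $X_t$, we have $|X_t|\le|\widetilde{H_t}|\le w$, so \emph{every bag is already small}, and since $\adh_{\cal D}(t)\le w$ the number of edges of $G$ running between the two sides of any $T$-edge is at most $w$; hence the only obstruction to ${\cal D}$ being a tree-partition is that an edge of $G$ may \emph{cross} several bags rather than join two adjacent ones. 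Second, $H^{+}$ is $H$ with one extra pendant edge, and $G'$ will be $G$ after equipping every vertex with a private pendant edge and ``threading'' every crossing edge through the bags it crosses; the extra pendant edge of $H^{+}$ is exactly the slack these private pendants and threading vertices consume.

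\textbf{Threading and the width bound.} For an edge $e=uv$ of $G$ with $u\in X_{t_1}$, $v\in X_{t_2}$, $t_1\neq t_2$, with $T$-path $t_1=p_0,p_1,\dots,p_r=t_2$, I subdivide $e$ into $u-x_1-\dots-x_{r-1}-v$ and put $x_i$ in $X_{p_i}$; after doing this for all crossing edges the decomposition becomes a tree-partition of the modified graph. The point is to bound, for each node $p$, the number $c_p$ of crossing edges threaded through $X_p$. A crossing edge through $p$ either runs between two distinct child-subtrees of $p$, or between a child-subtree and the parent side; in the latter case it is counted by $\adh_{\cal D}(p)$ together with the adhesions of the child-subtrees, so such edges number at most $w$ plus a contribution from the $\le w$ \emph{bold} children (each of adhesion $\le w$); and in the former case niceness is what forces the number of edges between two \emph{thin} subtrees to be zero, leaving again only $O(w^{2})$ edges attributable to the bold children. (One first tidies ${\cal D}$ so that a thin subtree is attached next to where its $\le 2$ outgoing edges land, so that thin subtrees never create crossings at an unrelated node.) Thus each bag gains $O(w^{2})$ vertices and each $T$-edge gains at most $w$ edges, and pushing this accounting carefully --- the bottleneck being the edges incident to the $\le w+1$ ``hub'' vertices crowding a bag --- gives tree-partition width at most $(w+1)^{2}/2$. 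Adding the private pendant edges, each in a fresh leaf bag, does not spoil this bound; call the resulting graph $G'$.

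\textbf{Transferring packings and coverings.} For the covering direction: subdividing an edge and adding pendant edges never destroys an $H$-immersion, and a private pendant edge at a branch vertex lets an $H$-immersion of a subgraph of $G$ be extended to an $H^{+}$-immersion of the corresponding subgraph of $G'$; so a minimum $H^{+}$-cover $C'$ of $G'$ yields an $H$-cover of $G$ of size $\le|C'|$ (keep the edges of $C'$ lying in $G$, replace each threading edge of $C'$ by one edge of $G$ it was cut from, discard pendant edges), giving $\cover_H(G)\le\cover_{H^{+}}(G')$. For the packing direction: from an $H^{+}$-packing in $G'$, delete the pendant part of each $H^{+}$-immersion expansion and un-subdivide (smooth the threading vertices, which have multidegree two) to get $H$-immersion expansions in $G$; the extra pendant edge of $H^{+}$ is what guarantees, given how crossing edges are threaded, that each resulting $H$-expansion can be chosen so that it uses each edge of $G$ no more than the matching $H^{+}$-expansion used the corresponding subdivided path --- so edge-disjointness survives --- whence $\pack_{H^{+}}(G')\le\pack_H(G)$.

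\textbf{Main obstacle.} The heart of the argument is the width estimate: bounding how many threading vertices pile up in a single bag once all crossing edges are routed, and extracting the clean constant $(w+1)^{2}/2$ rather than a cruder polynomial. This is where niceness is indispensable --- without it, thin branches could force unboundedly many crossing edges through a node --- and where one must be scrupulous about which edges get charged to which node. The second delicate point, which fixes the exact form of $H^{+}$, is making the packing transfer airtight: a threading vertex can in principle serve as a branch vertex of a vertex of $H^{+}$ of multidegree at most two (permitted by Observation~\ref{branches}), and one has to ensure --- spending the single pendant of $H^{+}$, and, if needed, decorating the threading paths or compactifying $H$-free thin branches --- that this is never forced to make two descendant $H$-expansions share an edge of $G$. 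Balancing these two uses of the one pendant edge against each other is precisely what pins down the definition of $H^{+}$.
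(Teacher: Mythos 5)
Your overall strategy --- take a nice tree-cut decomposition via Proposition~\ref{ganian}, thread the crossing edges through the bags they cross, charge the threading vertices to bold children and use niceness to neutralize thin ones, then transfer packings and coverings between $G$ and the threaded graph using the auxiliary construction $H\mapsto H^{+}$ --- is exactly the structure of the paper's proof, and your width accounting, while informal, is pointing at the right bound for the same reasons.

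However, there is a genuine gap in your understanding of $H^{+}$, and it breaks both transfer arguments. You write that ``$H^{+}$ is $H$ with one extra pendant edge'' and build $G'$ by ``equipping every vertex with a private pendant edge.'' In the paper, $H^{+}$ is obtained from $H$ by attaching to \emph{every} vertex $v$ a gadget on two new vertices $v',v''$ joined by a double edge, with single edges $\{v,v'\}$ and $\{v,v''\}$. The whole point is to guarantee $\mdelta(H^{+})\ge 3$. This is what makes the transfer safe: by Observation~\ref{branches}, no branch vertex of an $H^{+}$-immersion model can land on a multidegree-$2$ vertex, so in particular none lands on a threading (subdivision) vertex of $G'$, and dissolving those vertices does not destroy any $H^{+}$-expansion (Lemma~\ref{subdequal}). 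Moreover, the size-$2$ edge cut $\{\{v,v_i'\},\{v,v_i''\}\}$ forces every branch vertex of an original vertex of $H^{+}$ to map to an original vertex of $G$, which is what makes the packing transfer work (Lemma~\ref{pabound}). With a single pendant edge per vertex, the pendant vertex has multidegree $1$, so $\mdelta(H^{+})=1$, and any degree-$1$ or degree-$2$ vertex of $H^{+}$ may be mapped to a threading vertex. Un-subdividing then erases a branch vertex, and your ``delete the pendant part and un-subdivide'' step does not produce a valid $H$-immersion expansion. Your appeal to ``the extra pendant edge of $H^{+}$ is what guarantees\dots edge-disjointness survives'' is not an argument, and it cannot be turned into one with the pendant-edge definition.

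There is a second, related gap in the covering transfer. The paper does not thread $G$ directly; it first replaces $G$ by $G^{*}$, which attaches to each $v$ not one but $\mdeg_G(v)$ disjoint copies $Z_{v,1},\dots,Z_{v,\mdeg_G(v)}$ of the double-edge gadget. This multiplicity is needed in Lemma~\ref{cobound}: the key claim there is that if a minimum $H^{+}$-cover $X$ uses a non-original edge inside some $Z_{v,l}$, then $X$ must meet \emph{every} $Z_{v,i}$, and this lets you exchange those $\mdeg_G(v)$ gadget edges for the $\mdeg_G(v)$ edges of $G$ incident with $v$ without increasing $|X|$. A single private pendant per vertex gives you no exchange of that kind, so your sentence ``discard pendant edges'' is unjustified: a minimum $H^{+}$-cover of your $G'$ might sit mostly on pendants, and after discarding them you would not have a cover. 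To repair the proof you would need to adopt the paper's definitions of $H^{+}$ and $G^{*}$ (or an equivalent device that forces minimum multidegree $\ge 3$ and supports the exchange argument), then run your threading and width bound on a nice tree-cut decomposition of $G^{*}$, noting $\tcw(G^{*})=\tcw(G)$, exactly as the paper does.
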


Theorem~\ref{gk0opo} will allow us in Section~\ref{sec:linep} to consider
graphs of bounded tree-partition width instead of graphs of bounded
tree-cut width. Before we proceed with the proof of Theorem~\ref{gk0opo}, we need some 
definitions and a series of auxiliary results.
\medskip

For every graph $G$, we define $G^+$ as the graph obtained if, for
every vertex $v$, we add two new vertices $v'$ and $v''$ and the edges
$\{v', v''\}$ (of multiplicity 2), $\{v,v'\}$ and $\{v,v''\}$ (both of
multiplicity 1). Observe that for
every $G$, we have~$\mdelta(G^+) \geq 3$.
We also define $G^*$ as the graph obtained by adding, for every vertex
$v$, the new vertices $v_1', \dots, v'_{\mdeg(v)}$ and $v_1'', \dots,
v''_{\mdeg(v)}$ and the edges $\{v_i', v_i''\}$ (of multiplicity 2),
$\{v, v_i'\}$, and $\{v, v_i''\}$ (both of multiplicity 1), for
every~$i \in \intv{1}{\deg(v)}$. If $v$ is a vertex of $G$, then we
denote by $Z_{v,i}$ the subgraph $G^*[\{v,v_i',v_i''\}]$ for every $i
\in \intv{1}{\mdeg_G(v)}$.

 Our first aim is to prove the following three lemmata.

\begin{lemma}\label{subdequal}
  Let $G$ be a graph, let $H$ be a connected graph with at least one edge and
  let $G'$ be a subdivision of~$G^*$. Then we have
  \begin{itemize}
  \item $\pack_{H^+}(G^*) = \pack_{H^+}(G')$ and
  \item $\cover_{H^+}(G^*) = \cover_{H^+}(G')$.
  \end{itemize}
\end{lemma}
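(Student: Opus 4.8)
The plan is to show that immersions are essentially insensitive to subdivision, because a subdivision vertex has degree $2$ and can be ``dissolved'' back. Let me set up the key correspondence. Since $G'$ is a subdivision of $G^*$, every edge $e$ of $G^*$ is replaced by a path $P_e$ in $G'$ whose internal vertices all have degree $2$ in $G'$ and do not appear in any other $P_{e'}$. I will freely identify $V(G^*)$ with the ``branch vertices'' of $G'$ (those that are not internal to any $P_e$), so $V(G^*) \subseteq V(G')$. For an immersion model, the fact that we are looking for $H^+$ rather than $H$ is, as far as this lemma is concerned, irrelevant: I will prove the two displayed equalities for an arbitrary connected graph with at least one edge in place of $H^+$, and in particular for $H^+$. (The role of $H^+$, with its minimum multidegree $\geq 3$, will matter elsewhere, not here.)

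\medskip
\noindent\emph{Step 1: $\pack \le$ in $G^*$ translates to $G'$ and back.} First I would observe that there is a natural bijection between certifying paths in $G^*$ and certifying paths in $G'$: a path $Q$ in $G^*$ lifts to the path $Q'$ in $G'$ obtained by replacing each edge $e$ of $Q$ by the subpath $P_e$; conversely, any path in $G'$ between two branch vertices whose internal branch vertices are… — more carefully, any path $Q'$ in $G'$ projects to a walk in $G^*$, and since $Q'$ is a path and the $P_e$'s are internally disjoint, this walk is actually a path $Q$, and the correspondence is edge-disjointness preserving in both directions. Hence, given an $H^+$-immersion model $(\phi,\psi)$ in $G^*$, the branch vertices land in $V(G^*)\subseteq V(G')$ and composing $\psi$ with $e \mapsto$ (replace edges by the $P_e$'s) yields an $H^+$-immersion model in $G'$ on the ``same'' branch vertices, with edge-disjoint certifying paths (since the $P_e$ are internally disjoint and disjoint from $V(G^*)$). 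This gives a map from $H^+$-packings in $G^*$ to $H^+$-packings of the same size in $G'$; the inverse direction (projecting a model in $G'$ down to $G^*$) works symmetrically, using that a certifying path in $G'$, when projected, stays a path because subdivision vertices have degree $2$ so cannot be revisited. This establishes $\pack_{H^+}(G^*)=\pack_{H^+}(G')$.

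\medskip
\noindent\emph{Step 2: covers.} For the cover equality I would argue with the same correspondence but at the level of edges. Given a minimum $H^+$-cover $C'$ of $G'$, I define $C \subseteq E(G^*)$ by putting $e \in C$ whenever $E(P_e) \cap C' \neq \emptyset$; then $|C| \le |C'|$, and if $G^* \setminus C$ still contained an $H^+$-immersion model, lifting it to $G'$ via Step 1 would avoid $C'$ (each $P_e$ used is edge-disjoint from $C'$ by choice of $C$), contradicting that $C'$ is a cover; so $\cover_{H^+}(G^*) \le \cover_{H^+}(G')$. Conversely, given a minimum $H^+$-cover $C$ of $G^*$, pick for each $e\in C$ one edge $c_e \in E(P_e)$ and set $C' = \{c_e : e\in C\}$; if an $H^+$-model survived in $G' \setminus C'$, its projection to $G^*$ would be an $H^+$-model (here I need that deleting $c_e$ from $G'$ corresponds, after projection, to deleting $e$ from $G^*$ — a surviving model in $G'$ uses, for each $G^*$-edge it traverses, the \emph{entire} path $P_e$, since the $P_e$'s internal vertices have degree $2$, so if it avoids $c_e$ it cannot traverse $e$ at all), and this projected model avoids $C$, a contradiction. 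Hence $\cover_{H^+}(G') \le \cover_{H^+}(G^*)$, and we get equality.

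\medskip
\noindent\emph{Main obstacle.} The only real subtlety — and the step I would write out most carefully — is the projection direction in both Step 1 and Step 2: one must check that an immersion model (or a surviving model after deletions) in the subdivided graph $G'$, when edges are ``un-subdivided'', genuinely projects to an immersion model in $G^*$ rather than to something using a $P_e$ only partially, or using an internal subdivision vertex as a branch vertex. Both potential problems are ruled out by the degree-$2$ structure of subdivision vertices combined with Observation~\ref{branches} (a branch vertex of an $H^+$-model has multidegree $\ge \mdelta(H^+)\ge 3$, so it cannot be a degree-$2$ subdivision vertex) and the fact that a path entering a degree-$2$ vertex through one of its two incident edges must leave through the other. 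Once this is nailed down, the size-preserving correspondences in both directions are routine.
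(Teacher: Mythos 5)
Your proof is correct and follows essentially the same route as the paper: both use Observation~\ref{branches} together with $\mdelta(H^+)\ge 3$ to rule out subdivision vertices as branch vertices, the degree-two structure of subdivision paths to guarantee that each $P_e$ is traversed entirely or not at all (so models dissolve/project cleanly to $G^*$), and the trick of picking one edge per subdivided path when transferring covers. The paper is merely terser, stating two of the four inequalities as immediate from the subdivision relation where you spell out the projection.
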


\begin{proof}
We denote by $S$ the set of subdivision vertices added during the
construction of $G'$ from~$G^+$.
As $G'$ is a subdivision of $G^*$, we have $\pack_{H^+}(G') \geq
\pack_{H^+}(G^*)$ and $\cover_{H^+}(G') \geq \cover_{H^+}(G^*)$.

As a consequence of Observation~\ref{branches} and the fact that $\mdelta(H^+)
\geq 3$, if $M$ is an $H^+$-immersion expansion in $G'$ then no branch vertex of
$M$ belongs to~$S$. Indeed, every vertex of $S$ has multidegree 2
in~$G'$. Therefore, by dissolving in $M$ the vertices of $S$ that belong to
$V(M)$, we obtain an $H^+$-immersion expansion in~$G^*$. It follows that
$\pack_{H^+}(G^*) \geq \pack_{H^+}(G')$, hence $\pack_{H^+}(G^*) =
\pack_{H^+}(G')$.

On the other hand, let $X$ be an $H^+$-cover of $G^*$ and let $X'$ be
a set of edges constructed by taking, for every $e \in X$, an edge of
the path of $G'$ connecting the endpoints of $e$ that has been created by
subdividing~$e$. Assume that $X'$ is not an $H^+$-cover of
$G'$. According to the remark above, this implies that $X$ is not an
$H^+$-cover of $G^*$, a contradiction. Hence $X'$ is an $H^+$-cover of
$G'$ and thus $\cover_{H^+}(G^*) = \cover_{H^+}(G')$.
\end{proof}

\begin{lemma}\label{pabound}
  For every two graphs $H$ and $G$ such that $H$ is connected and has
  at least one edge, we have
  $\pack_{H^+}(G^*) \leq \pack_{H}(G)$.
\end{lemma}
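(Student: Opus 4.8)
The plan is to relate an $H^+$-packing in $G^*$ to an $H$-packing in $G$ by showing that each $H^+$-immersion expansion in $G^*$ ``lives'' essentially inside a copy of $G$ sitting in $G^*$, and that the gadgets $Z_{v,i}$ attached to each vertex $v$ contribute nothing that could be shared between distinct members of the packing. First I would fix an $H^+$-packing $\mathcal{M} = \{M_1, \dots, M_p\}$ in $G^*$ with $p = \pack_{H^+}(G^*)$, so the $M_j$ are pairwise edge-disjoint $H^+$-immersion expansions. The goal is to produce from each $M_j$ an $H$-immersion expansion $M_j'$ in $G$, still pairwise edge-disjoint, which will give $\pack_H(G) \geq p$.

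The key structural observation is this. Recall $H^+$ is obtained from $H$ by attaching to every vertex the ``bull-like'' gadget (two new vertices joined by a double edge, each joined to the original vertex), so $\mdelta(H^+) \geq 3$ and in fact every vertex of $H$ becomes a branch vertex of multidegree $\geq \mdeg_H(v) + 2 \geq 3$ in $H^+$, while the four new vertices of each gadget have multidegree $2$ or $3$. By Observation~\ref{branches}, if $(\phi,\psi)$ is the immersion model of $M_j$, then every branch vertex of $M_j$ that is the image of an $H$-vertex has multidegree at least $3$ in $G^*$. I would argue that such branch vertices must lie in $V(G) \subseteq V(G^*)$: the only vertices of $G^*$ not in $V(G)$ are the gadget vertices $v_i', v_i''$, which have multidegree $2$ (for $v_i', v_i''$: one double edge between them plus one edge to $v$ — that is multidegree $3$; let me instead use the cut structure). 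The cleaner argument: each $Z_{v,i}$ is attached to the rest of $G^*$ only through the single vertex $v$, so $\{v\}$ is a cut vertex separating $\{v_i', v_i''\}$ from everything else, and moreover the set of edges of $Z_{v,i}$ incident to $v$ has size $2$. Any path of $\psi$ entering the gadget $Z_{v,i}$ must enter and leave through $v$, so a path visiting $v_i'$ or $v_i''$ uses at least two of the (only two) edges at $v$ inside $Z_{v,i}$; hence at most one certifying path of $M_j$ can touch the interior of $Z_{v,i}$, and if one does, it is a path that enters $Z_{v,i}$ and comes back out (a ``detour''), or it is one of the gadget-edges themselves used to realize the $H^+$-gadget attached at $\phi^{-1}(v)$.

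With this in hand, I would define $M_j'$ by taking $M_j$, deleting all edges lying inside any gadget $Z_{v,i}$, and for each certifying path of $M_j$ that made a detour into some $Z_{v,i}$ (entering and leaving through $v$), short-cutting that detour at $v$; the branch vertices of $M_j$ that were images of $H$-vertices stay put (they are in $V(G)$), and the branch vertices that were images of gadget-vertices of $H^+$ get discarded along with the gadget edges. This yields an $H$-immersion model in $G$: the restriction of $\phi$ to $V(H) \subseteq V(H^+)$ composed with the natural inclusion, and the restriction of $\psi$ to $E(H)$, with detours removed. Edge-disjointness of the $M_j'$ follows because $M_j' $ uses only edges of $G$ that were already used by $M_j$ (we only deleted edges and short-cut through vertices, never adding edges of $G$), and the $M_j$ were edge-disjoint.

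The main obstacle I expect is the bookkeeping in the short-cutting step: I need to be sure that after removing a detour into a gadget, the resulting walk is still a path (no repeated vertices), or at least that it contains a path between the right endpoints using a subset of the edges — and that doing this simultaneously for all detours of all paths in all $M_j$ preserves edge-disjointness. This should be routine since a detour into $Z_{v,i}$ is a closed subwalk based at $v$ and excising it strictly decreases the edge set; the only subtlety is that two different certifying paths of the \emph{same} $M_j$ might both want to pass through $v$, but that is fine as $v$ can be an internal vertex of several certifying paths. A secondary point to check is that the gadget edges attached at a branch vertex $\phi(v)$ (with $v \in V(H)$) really are available in sufficient multiplicity — but since $M_j$ realizes the $H^+$-gadget at $\phi(v)$ using only edges of some $Z_{v,i}$'s (as these are the only edges reaching the required new branch vertices), and since each such gadget is used by at most this one expansion, and across the packing the gadgets used by distinct $M_j$ are edge-disjoint — all of this is consistent, and discarding them costs us nothing in $G$. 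Putting it together gives $\pack_H(G) \geq p = \pack_{H^+}(G^*)$, which is the claim.
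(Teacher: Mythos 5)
Your overall strategy is the same as the paper's (restrict the $H^+$-model to an $H$-model that lives in the copy of $G$ inside $G^*$, and observe that edge-disjointness is preserved because you only delete edges), but the central step is asserted rather than proved. You need that for every \emph{original} vertex $u$ of $H^+$ the branch vertex $\phi(u)$ is an original vertex of $G^*$; you correctly notice that the naive degree argument via Observation~\ref{branches} fails (the gadget vertices $v_i',v_i''$ have multidegree $3$, not $2$), but the ``cleaner'' cut argument you substitute only concerns certifying paths that \emph{pass through} a gadget $Z_{v,i}$ — it says nothing about the scenario it is supposed to exclude, namely $\phi(u)=v_i'$ or $\phi(u)=v_i''$, in which case the certifying paths \emph{start} inside the gadget and never ``enter and leave through $v$''. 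So the parenthetical ``(they are in $V(G)$)'' on which your construction of $M_j'$ rests is unsupported. The paper closes exactly this hole with an edge-cut counting argument: since $u$ is original, $\mdeg_{H^+}(u)\geq 3$, so $\phi(u)$ is the origin of at least three edge-disjoint certifying paths to distinct branch vertices, and these cannot be routed across the cut $\{\{v,v_i'\},\{v,v_i''\}\}$ of size two that separates $\{v_i',v_i''\}$ from the rest of $G^*$ (with a little extra care in the subcase where a neighbour of $u$ is mapped to the other gadget vertex, where one also counts the paths required at that vertex). Without some version of this argument your proof does not go through.

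A secondary point: the whole ``detour/short-cutting'' machinery is vacuous. A certifying path is a path, so it cannot visit $v$ twice; hence no certifying path can enter the interior of $Z_{v,i}$ and come back out, and (once the branch-vertex claim above is established) the paths $\psi(e)$ for $e\in E(H)$ automatically contain only original vertices and edges, so $(\phi|_{V(H)},\psi|_{E(H)})$ is an $H$-immersion model in $G$ with no surgery needed — this is precisely how the paper argues. Relatedly, your claim that ``at most one certifying path of $M_j$ can touch the interior of $Z_{v,i}$'' is not accurate in general: if a (gadget) branch vertex of $H^+$ is mapped into $\{v_i',v_i''\}$, up to three certifying paths meet the interior. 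Neither of these two points is fatal, but the missing proof that original vertices map to original vertices is a genuine gap.
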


\begin{proof}
In $G^*$ (respectively $H^+$), we say that a vertex is \emph{original}
if it belongs to $V(G)$ (respectively $V(H)$).
Let $(\phi, \psi)$ be an $H^+$-immersion model in $G^*$.

We first show that if $u$ is an original vertex of $H^+$, then $\phi(u)$
is an original vertex of~$G^*$.
By contradiction, let us assume that $\phi(u)$ is not original, for
some original vertex $u$ of $H^+$. Then $\phi(u) = v'_i$ or $\phi(u) = v''_i$,
for some $v\in V(G)$ and $i \in \intv{1}{\mdeg_G(v)}$.

Observe that since $H$ is connected and has at least one edge, every
vertex of $H^+$ has degree at least three: let $x$, $y$, and $z$ be the endpoints
of three multiedges incident with~$u$. Then
$\psi(\{u,x\})$, $\psi(\{u,x\})$, and $\psi(\{u,x\})$ 
are edge-disjoint paths connecting $\phi(u)$ to three distinct vertices. This is
not possible because there is an edge cut of size two,
$\{\{v,v'_i\},\{v,v''_i\}\}$, separating the two vertices $v'_i$ and $v''_i$
(among which is $\phi(u)$) from the rest of the graph.
Consequently, if $u \in V(H^+)$ is original, then $\phi(u)$ is original.

Let us now consider an edge $\{u,v\} \in E(H)$. By the above remark,
$\phi(u)$ and $\phi(v)$ are original vertices of~$G^*$. It is easy to
see that $\psi(\{u,v\})$ contains only original vertices
of~$G^*$. Indeed, if this path contained a non-original vertex $w'$ or
$w''$ for some original vertex $w$ of $V(G^*)$, it would use $w$ twice in order to
reach $u$ and $v$, what is not allowed.
Therefore,  from the definition of $H^+$, the pair  $(\phi|_{V(H)}, \psi|_{E(H)})$  
is an $H$-immersion model of 
$G$. 

We proved that every $H^+$-immersion-expansion of $G^*$ contains an $H$-immersion-expansion that
belongs to the subgraph $G$ of $G^*$. Consequently every $H^+$-packing
of $G^*$ contains an $H$-packing of the same size that belongs to $G$,
and the desired inequality follows.
\end{proof}

\begin{lemma}\label{cobound}
  For every two graphs $H$ and $G$ such that $H$ is connected and has
  at least one edge, we have $\cover_H(G) \leq \cover_{H^+}(G^*)$.
\end{lemma}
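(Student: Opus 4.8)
The plan is to show that from any $H^+$-cover $X$ of $G^*$ one can extract an $H$-cover of $G$ of size at most $|X|$; in fact I will show that $X \cap E(G)$ already works, where $E(G)$ is viewed as a subset of $E(G^*)$ in the natural way (since $G$ is a subgraph of $G^*$). So let $X$ be a minimum $H^+$-cover of $G^*$ and set $C = X \cap E(G)$. Suppose for contradiction that $C$ is not an $H$-cover of $G$, i.e.\ $G \setminus C$ contains an $H$-immersion model $(\phi,\psi)$.

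The key step is to \emph{extend} this $H$-immersion model of $G\setminus C$ into an $H^+$-immersion model of $G^*\setminus X$, thereby contradicting that $X$ is an $H^+$-cover. Recall that $G^*$ is obtained from $G$ by attaching, for every vertex $v$ and every $i\in\intv{1}{\mdeg_G(v)}$, the gadget $Z_{v,i}=G^*[\{v,v_i',v_i''\}]$ consisting of the double edge $\{v_i',v_i''\}$ together with the pendant-like edges $\{v,v_i'\}$ and $\{v,v_i''\}$. The graph $H^+$ is obtained from $H$ by attaching exactly one such gadget $\{u',u''\}$-triangle to each vertex $u$ of $H$. Thus, given the $H$-immersion model $(\phi,\psi)$ in $G\setminus C$, I extend $\phi$ by sending the new vertices $u',u''$ of $H^+$ (attached at $u\in V(H)$) to, say, $v_1',v_1''$ where $v=\phi(u)$, and I extend $\psi$ by sending the three new multiedges of $H^+$ at $u$ into the three edges of the gadget $Z_{v,1}$ (the two parallel edges $\{u',u''\}$ of $H^+$ map to the two parallel edges of $\{v_1',v_1''\}$, and the single edges $\{u,u'\}$, $\{u,u''\}$ map to $\{v,v_1'\}$, $\{v,v_1''\}$). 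This is well-defined and uses edge-disjoint paths because distinct vertices $u$ of $H$ have distinct images $v=\phi(u)$ (as $\phi$ is injective), so the gadgets $Z_{\phi(u),1}$ used for different vertices of $H$ are pairwise edge-disjoint, and they are edge-disjoint from $E(G)$, hence from the paths $\psi(E(H))$. Moreover none of these gadget edges lies in $X$: the gadget edges are not in $E(G)$, and I should note that one may assume without loss of generality that $X \subseteq E(G)$ — indeed any gadget edge in $X$ can be profitably replaced, or simply: if $X$ contains a gadget edge it is useless for covering (removing it from $X$ still leaves an $H^+$-cover, since by the argument in Lemma~\ref{pabound} every $H^+$-immersion expansion of $G^*$ restricts to an $H$-immersion expansion living in $G$, so it suffices to hit those in $E(G)$), contradicting minimality. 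Hence all the new edges I use avoid $X$, and combined with $\psi(E(H))$ avoiding $C = X\cap E(G)$ we get an $H^+$-immersion model in $G^*\setminus X$, the desired contradiction.

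The main obstacle is the bookkeeping around \emph{which} edges of $G^*$ the cover $X$ is allowed to contain: one must either argue (as above) that a minimum $H^+$-cover can be taken inside $E(G)$ — using the structural fact from the proof of Lemma~\ref{pabound} that every $H^+$-immersion expansion of $G^*$ has all its branch vertices original and all its certifying paths inside $G$ — or, alternatively, directly massage an arbitrary $H^+$-cover $X$ by replacing each gadget edge of $X$ with a suitable edge of $E(G)$ near it (or discarding it) without increasing $|X|$ and without losing the covering property. Once that reduction to $X\subseteq E(G)$ is in place, the extension of the immersion model through the untouched gadgets is routine, using only the facts that $\mdeg_{H^+}(u)=\mdeg_H(u)+3\ge 3$ forces no branch vertices into the gadgets and that $\phi$'s injectivity keeps the gadgets disjoint. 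I would present the reduction step first (citing the internal argument of Lemma~\ref{pabound}), then the model extension, then conclude by contradiction with minimality of $X$.
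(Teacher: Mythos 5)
The overall strategy---reduce to the case where the minimum $H^+$-cover $X$ of $G^*$ consists only of original edges, then extend any $H$-immersion model in $G\setminus X$ through fresh gadgets to contradict that $X$ covers $H^+$---is the same as the paper's, and once $X\subseteq E(G)$ is secured your gadget-extension argument is fine (it matches the paper's ``first case''). The gap is in the reduction. You argue that from a \emph{minimum} $H^+$-cover $X$ one can simply \emph{discard} any gadget edge $e$, ``since by Lemma~\ref{pabound} every $H^+$-immersion expansion of $G^*$ restricts to an $H$-immersion expansion living in $G$, so it suffices to hit those in $E(G)$.'' That inference is false: knowing that one \emph{could} build a cover from original edges does not mean that the particular cover $X$ you started from has redundant gadget edges. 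A concrete failure: take $G=H=K_2$. Then $G^*\cong H^+$, the unique $H^+$-immersion expansion is all of $G^*$, and any single edge---in particular a gadget edge $e\in E(Z_{v,1})$---is a minimum $H^+$-cover. Here $X\setminus\{e\}=\emptyset$ is not a cover of $G^*$, and $X\cap E(G)=\emptyset$ is not an $H$-cover of $G$. So the ``discard'' route collapses.

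The paper instead performs a genuine \emph{replacement}. The crucial lemma-internal claim, which you would need and did not supply, is: if a minimum $H^+$-cover $X$ contains an edge of $Z_{v,l}$, then it contains an edge of \emph{every} $Z_{v,i}$ for $i\in\intv{1}{\mdeg_G(v)}$ (otherwise one swaps $Z_{v,l}$ for an untouched $Z_{v,i}$ in a witnessing $H^+$-expansion and escapes $X$, contradicting that $X$ is a cover). This claim is exactly what gives size control: for each afflicted original vertex $v$, replace all of $X$'s gadget edges at $v$ by the $\mdeg_G(v)$ edges of $G$ incident with $v$, yielding an all-original $H^+$-cover $Y$ with $|Y|\le|X|$, after which your extension argument applies. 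You do briefly gesture at ``directly massage $X$ by replacing each gadget edge with a suitable edge of $E(G)$ near it,'' which is the right idea, but without the swap-claim there is neither a well-defined replacement nor a bound on its size; repairing the proof along that line essentially reconstructs the paper's argument.
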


\begin{proof}
Similarly to the proof of Lemma~\ref{pabound}, we say that an edge of
$G^*$ is \emph{original} if it belongs to~$E(G)$.
Let $X\subseteq E(G^*)$ be a minimum cover of $H^+$-immersion expansions in~$G^*$.
\smallskip

\noindent \textit{First case:} all the edges in $X$ are original.
In this case, $X$ is an $H$-cover of $G$ as well. Indeed, if
$G\setminus X$ contains an $H$-immersion expansion $M$, then $G^*\setminus X$ contains
$M^*$ that, in turn, contains~$H^+$. Hence in this case, $\cover_H(G) \leq \cover_{H^+}(G^*)$.
\smallskip

\noindent \textit{Second case:} there is an edge $e \in X$ that is not
original.
Let $v$ be the original vertex of $G^*$ such that either $e \in
Z_{v,l}$ for some~$l \in \intv{1}{\mdeg_G(v)}$.
Let us first show the following claim.

\noindent{\em Claim:}
  For every $i \in \intv{1}{\mdeg_G(v)}$, there is an edge of
  $Z_{v,i}$ that belongs to $X$.
\smallskip

\noindent{\em Proof of claim:}
  Looking for a contradiction, let us assume that for some $i \in
  \intv{1}{\mdeg_G(v)}$, we have $E(Z_{v,i}) \cap X =
  \emptyset$. Clearly $i \neq l$.
  By minimality of $X$, the graph $G \setminus (X \setminus \{e\})$
  contains an $H^+$-immersion expansion $M$ that uses~$e$. Observe that $M' = M
  \setminus E(Z_{v,l}) \cup E(Z_{v,i})$ contains an $H^+$-immersion expansion
  (since $Z_{v,l}$ and $Z_{v,i}$ are isomorphic). Hence, $M'$ is a subgraph
  of $G \setminus (X \setminus \{e\})$ that contains an
  $H^+$-immersion expansion. This is not possible as $X$ is a cover, so we reach
  the contradiction we were looking for and the claim holds.

We build a set $Y$ as follows. For every edge $f\in X$, if $f$ is
original then we add to~$Y$. Otherwise, if $v_f$ is the (original)
vertex of $G^*$ such that $e \in E(Z_{v_f, i})$ for some $i \in
\intv{1}{\mdeg_G(v_f)}$, then we add to $Y$ all edges that are
incident to $v_f$.

The above claim ensures that when a non-original edge $f$ of $X$ is
encountered, then $X$ contains an edge in each of $Z_{v_f,1}, \dots,
Z_{v_f,\mdeg_G(v_f)}$. Therefore, the same set of edges, of size
$\mdeg_G(v_f)$, will be added to $Y$ when encountering an other edge from $Z_{v_f,1}, \dots,
Z_{v_f,\mdeg_G(v_f)}$. Consequently, $|X| = |Y|$.

Let us not show that $Y$ is an $H^+$-cover of~$G^*$. Suppose that there exists 
an $H^+$-immersion expansion $M$ in~$G^* \setminus Y$. Observe that since $H$ is
connected and has at least one edge, $M$ does not belong to
$\bigcup_{i\in \{1,\ldots,\mdeg_G(u)\}} Z_{u,i}$, for every original vertex $u$
of~$G^*$. Let $$Z = \bigcup_{u \in V(G)} \bigcup_{i\in \{1,\ldots,\mdeg_G(u)\}} E(Z_{u,i})$$
Then $M$ is a subgraph of $G \setminus (Y \cup Z)$. As $X \subseteq Y
\cup Z$, this contradicts the fact that $X$ is a cover. Therefore, $Y$
is an $H^+$-cover. Moreover all the edges in $Y$ are original. As this
situation is treated by the first case above, we are done.
\end{proof}

We are now ready to prove~Theorem~\ref{gk0opo}.

\begin{proof}[Proof of Theorem~\ref{gk0opo}] Let $k=\tcw(G)$.
We examine the nontrivial case where $G$ is not a tree, i.e., $\tcw(G)\geq 2$.
  Let us consider the graph~$G^*$. We claim that $\tcw(G^*) = \tcw(G)$. Indeed, starting from an optimal tree-cut
  decomposition of $G$, we can, for every vertex $v$ of $G$ and for every $i
  \in \intv{1}{\mdeg_G(v)}$, create a bag
  that is a children of the one of $v$ and contains~$\{v_i', v_i''\}$.
  According to the definition of $G^*$, this creates a tree-cut
  decomposition $\mathcal{D}=
((T,s), \{X_t\}_{t \in V(T)})$ of~$G^*$.    Observe that for every vertex $x$ that we introduced to the tree of the decomposition during this
  process, $\adh_{\mathcal{D}}(x) = 2$ and the corresponding bag has
size two. This proves that $\tcw(G^*) \leq
  \max(\tcw(G),2)=\tcw(G)$. As $G$ is a subgraph of $G^{*}$, we obtain $\tcw(G)\leq \tcw(G^*)$ and the proof of the claim is complete.

According to Proposition~\ref{ganian}, we can
assume that $G^*$ has a nice rooted tree-cut decomposition of width $\leq k$. For notational simplicity 
we again denote it  by ${\cal D}=
((T,s), \{X_t\}_{t \in V(T)})$ and,  obviously, we  
can also assume that all leaves of $T$ 
correspond to non-empty bags.

Our next step is to transform the rooted tree-cut decomposition $\mathcal{D} $ into a rooted tree-partition $\mathcal{D'} = ((T,s), \{X'_t\}_{t \in V(T)})$
of a subdivision $G'$ of $G^*$. Notice that the only differences
between two decompositions are that,  in
a tree-cut decomposition, empty bags are allowed as well as edges connecting vertices of bags
corresponding to non-adjacent vertices of $T$.

We  proceed as follows: if
$X$ is a bag crossed by edges, we subdivide every edge crossing $X$
and add the obtained subdivision vertex to~$X$. By repeating this process we
decrease at each step the number of bags crossed by edges, that
eventually reaches zero. Let $G'$ be the obtained graph and observe that $G'$ is 
a subdivision of $G$. As $G$ is connected, the obtained rooted tree-cut decomposition
$\mathcal{D'} = ((T,s), \{X'_t\}_{t \in V(T)})$  is a rooted tree
partition  of $G'$. 

Notice that the adhesion of any bag of $T$ in $\mathcal{D}$ is the same as in $\mathcal{D'}$.
However, the bags of $\mathcal{D}'$ may grow during the construction
of~$G'$. 
Let $t$ be a vertex of $T$ and let $\{t_1, \ldots, t_m\}$ be the set of children of $t$. 
We claim that $|X_t'|\le (k+1)^2/2$.



Let $E_t$ be the set of edges crossing $X_t$ in $G$. Let $H_t$ be the torso of $\mathcal{D}$ at $t$, and let $H_t'=H_t\setminus X_t$. Observe that $|E_t|$ is the same as the number of edges in $H_t'$. Let $z_p$ be the vertex of $H_t'$ corresponding to the parent of $t$, and similarly for each $i\in \{1, \ldots, m\}$ let $z_i$ be the vertex of $H_t'$ corresponding to the child $t_i$ of $t$. Notice that if $t_i$ is a thin child of $t$, then $z_i$ can be adjacent to only $z_p$ as $\mathcal{D}$ is a nice rooted tree-cut decomposition. Thus the sum of the number of incident edges with $z_i$ in $H_t'$ for all thin children $t_i$ of $t$ is at most $\adh_{\cal D}(t) \leq k$. On the other hand, if $t_i$ is a bold child of $t$, then $z_i$ has at least $3$ neighbors in $H_t$, and thus
it is contained in the 3-center of $(H_t, X_t)$. Thus, the number of all bold children of $t$ is bounded by $k-|X_t|$. Since each vertex in $H_t'$ is incident with at most $k$ edges, the total number of edges in $H_t'$ is at most $(k-|X_t|+1)k/2 + k$. As $|E(H_t')|=|E_t|=|X_t'\setminus X_t|$, it implies that $|X_t'|\leq  |X_{t}|+k\cdot (k-|X_{t}|+2)/2 \leq  \max\{2k,k(k+2)/2\} \leq  (k+1)^2/2.$ We conclude that $G'$ has a rooted tree partition  of width at most $(\tcw(G)+1)^{2}/2$.

Recall that $G'$ is a subdivision of  $G^*$. By the virtue of Lemmata~\ref{cobound},
\ref{pabound}, and \ref{subdequal}, we obtain that $\pack_{H^+}(G') \leq \pack_H(G)$ and
   $\cover_{H}(G) \leq \cover_{H^+}(G')$. Hence $G'$ satisfies the desired properties.
\end{proof}

\section{Erdős-Pósa for bounded tree-partition width}
\label{sec:linep}

%

Before we proceed, we require the following lemma and an easy observation.

\begin{lemma}\label{intersect}
 Let $G$ and $H$ be two graphs and let $X \subseteq V(G)$. Let 
  $\mathcal{C}$ be the collection of connected components of $G
  \setminus X$. If $M$ is an $H$-immersion expansion
  of $G$ 
  then 
   $M$ contains vertices
  from at most $(|X|+1)\cdot |E(H)|$ graphs of $\mathcal{C}$.
\end{lemma}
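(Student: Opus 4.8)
The plan is to bound, separately for each certifying path of the $H$-immersion expansion $M$, the number of components of $\mathcal{C}$ that it meets, and then to sum these bounds over the $|E(H)|$ certifying paths. The whole argument rests on the obvious principle that a path can only move from one component of $G\setminus X$ to another by passing through a vertex of $X$, and that a path can afford at most $|X|$ such passages.

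Concretely, I would first establish the following elementary fact: any path $P$ of $G$ meets at most $|X|+1$ components of $\mathcal{C}$. To see this, write $P = v_0v_1\cdots v_\ell$ and delete from the index sequence $0,1,\dots,\ell$ all indices $i$ with $v_i \in X$. Since $P$ is a path, its vertices are pairwise distinct, so at most $|X|$ indices are deleted, and the remaining indices form a collection of at most $|X|+1$ maximal intervals. The subpath of $P$ spanned by each such interval avoids $X$, hence is a connected subgraph of $G\setminus X$, and therefore lies entirely inside one component of $\mathcal{C}$. Every component met by $V(P)$ contains some vertex $v_i\notin X$, and every such vertex belongs to one of those intervals, so $P$ meets at most $|X|+1$ components.

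Next I would apply this to $M$. Recall that $V(M) = \phi(V(H)) \cup \bigcup_{e\in E(H)} V(\psi(e))$, where each $\psi(e)$ is a path of $G$. By the fact above, each of the $|E(H)|$ paths $\psi(e)$ meets at most $|X|+1$ components of $\mathcal{C}$, so $\bigcup_{e\in E(H)} V(\psi(e))$ meets at most $(|X|+1)\cdot|E(H)|$ of them. It remains to account for the branch vertices $\phi(V(H))$: here I would use that $H$ is connected with at least one edge (equivalently, $H$ has no isolated vertex), so that every vertex $u$ of $H$ is an endpoint of some edge $e$ incident with it; then $\phi(u)$ already appears as an endpoint of the path $\psi(e)$. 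Hence $V(M) = \bigcup_{e\in E(H)} V(\psi(e))$, and the displayed bound covers all of $V(M)$, which gives the claim.

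I expect the only genuinely delicate point to be this last step concerning branch vertices: without (a consequence of) the connectivity of $H$, an isolated vertex of $H$ could be mapped by $\phi$ into a component disjoint from all certifying paths, and the stated bound of $(|X|+1)\cdot|E(H)|$ would fail — so the argument does, implicitly, rely on the standing assumption that $H$ is connected with at least one edge. Everything else is the routine counting described above.
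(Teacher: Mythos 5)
Your proof is correct and takes essentially the same route as the paper: each certifying path of $M$ meets at most $|X|+1$ components of $\mathcal{C}$ (since a path visits each vertex of the separator $X$ at most once), and $M$ is covered by $|E(H)|$ such paths. Your explicit handling of the branch vertices is in fact slightly more careful than the paper's own proof, which silently treats $V(M)$ as covered by the certifying paths and thus implicitly assumes $H$ has no isolated vertex — harmless in the paper's applications, where $H$ is always connected with at least one edge, but worth noting given that the lemma is stated for arbitrary $H$.
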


\begin{proof}
Let  $P$ be a certifying  path of $M$ connecting two branch vertices
of $M$.
 Since $P$ is a path, it cannot use twice the same vertex of
 $X$. Besides, as $X$ is a separator, $P$ must go through a vertex of
 $X$ in order to go from one graph of $\mathcal{C}$ to an other
 one. Therefore, $P$ contains vertices from at most $|X|+1$ graphs of~$\mathcal{C}$.
 The desired bound follows as $E(M)$ is partitioned into $|E(H)|$ certifying paths.
\end{proof}

%
%


\begin{observation}
\label{obd4}
Let $G$ and $H$  be graphs and let $F\subseteq E(G)$. Then it holds that $\cover_H(G)\leq \cover_H(G\setminus F)+|F|$.
\end{observation}

\noindent For a graph $H$, we define $\omega_{H}:\N\rightarrow\N$ so that
 $\omega_H(r) = \left \lceil r\cdot
\frac{3r +1}{2} \cdot |E(H)|\right \rceil.$ The next Theorem is an important ingredient of our results.

\begin{theorem}
\label{o0p2kz9}
 Let $H$ be a connected graph with at least one edge. Then for every graph $G$  it holds
that $\cover_H(G) \leq \omega_{H}(\tpw(G))\cdot \pack_H(G)$
\end{theorem}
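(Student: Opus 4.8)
The plan is to prove the bound $\cover_H(G) \leq \omega_H(\tpw(G))\cdot \pack_H(G)$ by induction on $\pack_H(G)$, fixing an optimal rooted tree-partition $\mathcal{D}=((T,s),\mathcal{X})$ of $G$ of width $r := \tpw(G)$. The base case $\pack_H(G)=0$ means $G$ contains no $H$-immersion expansion, so the empty set is an $H$-cover and the inequality holds trivially.

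For the inductive step, assume $\pack_H(G)=p\geq 1$. The key idea is to find a \emph{single} bag $X_t$ of $\mathcal{D}$ that is ``central'' to all $H$-immersion expansions, in the sense that deleting a small set of edges around $X_t$ kills at least one expansion from every maximum packing — or more precisely, drops the packing number. Concretely, I would root the analysis by choosing $t\in V(T)$ of maximum height such that $G_t$ still contains an $H$-immersion expansion (recall $G_t = G[\bigcup_{u\in\desc_{(T,s)}(t)}X_u]$). By maximality of the height, for each child $t_i$ of $t$ the graph $G_{t_i}$ contains no $H$-immersion expansion, so any $H$-immersion expansion inside $G_t$ must use a vertex of $X_t$. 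Now let $F_t$ be the set of all edges incident with $X_t$ together with all edges $E_f$ for edges $f$ of $T$ incident with $t$; since the tree-partition has width $r$, $X_t$ has at most $r$ vertices each of bounded degree, and there are at most (degree of $t$ in $T$ bounded appropriately) adjacent bag-cuts each of size $\leq r$. The quantity $\omega_H(r)=\lceil r\cdot\frac{3r+1}{2}\cdot|E(H)|\rceil$ is exactly the budget for this: I expect $|F_t| \le r\cdot\frac{3r+1}{2}$ roughly, and the extra factor $|E(H)|$ will come from a more careful argument (see below) counting how the expansion threads through bags, invoking Lemma~\ref{intersect} with $X=X_t$.

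The heart of the argument: I claim that $G\setminus F_t$ has $\pack_H(G\setminus F_t) \le p-1$. Suppose not; then $G\setminus F_t$ has $p$ edge-disjoint $H$-immersion expansions $M_1,\dots,M_p$, none of which can use any edge incident to $X_t$. But $G_t$ still contains an $H$-immersion expansion $M_0$, and by the height-maximality observation $M_0$ meets $X_t$; moreover $M_0$ lives entirely within $G_t$ and — because every edge of $M_0$ incident to $X_t$ has been removed in $G\setminus F_t$, whereas $M_1,\dots,M_p$ avoid those edges — $M_0$ is edge-disjoint from all of $M_1,\dots,M_p$ (any edge shared would have to lie in $G_t$; an edge of $M_0$ either touches $X_t$, hence is in $F_t$ and absent from the $M_i$'s, or lies in some $G_{t_i}$, but then it belongs to a component of $G\setminus X_t$ that the $M_i$'s could also use — so this needs care). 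This is the step I expect to be the main obstacle: ensuring $M_0$ can be chosen edge-disjoint from the whole family, which may require instead deleting $F_t$ from $G_t$ only after first extracting $M_0$, or iterating the choice of $t$ so that $G_t$ is taken minimal among descendants still containing an expansion and using the tree-partition structure (edges only go between adjacent bags) to confine $M_0$ to a ``pendant'' region. Once this is settled we get $\{M_0,M_1,\dots,M_p\}$ as an $H$-packing of size $p+1$ in $G$, contradicting $\pack_H(G)=p$.

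Granting $\pack_H(G\setminus F_t)\le p-1$, we finish by induction and Observation~\ref{obd4}:
\[
\cover_H(G) \;\le\; \cover_H(G\setminus F_t) + |F_t| \;\le\; \omega_H(r)\cdot(p-1) + \omega_H(r) \;=\; \omega_H(r)\cdot p,
\]
provided we have verified $|F_t|\le\omega_H(r)=\omega_H(\tpw(G))$. For this last bound, rather than the crude ``all edges touching $X_t$'' set, I would define $F_t$ more economically: the expansion $M_0$ we want to destroy meets vertices from at most $(|X_t|+1)\cdot|E(H)| \le (r+1)\cdot|E(H)|$ components of $G\setminus X_t$ by Lemma~\ref{intersect}, each reached through the cut $E_f$ of some tree edge $f$ at $t$ of size $\le r$; taking $F_t$ to be the union of $\le \frac{3r+1}{2}$-many such cuts (the $+1/2$ absorbing the edges internal to $X_t$, of which there are at most $\binom{r}{2}$-ish once multiplicities are controlled by the width) yields $|F_t|\le r\cdot\frac{3r+1}{2}\cdot|E(H)|$, matching $\omega_H$ after taking the ceiling. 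I would double-check the exact constant $\frac{3r+1}{2}$ against the definition of tree-partition width (bags of size $\le r$, cuts $E_f$ of size $\le r$) to confirm the arithmetic closes, but this is routine bookkeeping rather than a conceptual difficulty.
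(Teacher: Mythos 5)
Your overall strategy matches the paper's: induct on the packing number, pick the lowest (in the tree) ``infected'' vertex $t$ (you call it maximum height, the paper calls it minimum height, but both mean $G_t$ contains an $H$-immersion expansion while no $G_{t'}$ does for a child $t'$ of $t$), delete a small set of edges around $X_t$ whose size is controlled via Lemma~\ref{intersect}, and finish with Observation~\ref{obd4} and the induction hypothesis. However, you explicitly leave the central step unresolved: you never establish that $\pack_H(G \setminus F_t) \le p-1$, and you flag this yourself as ``the main obstacle.'' As written, your argument that $M_0$ is edge-disjoint from $M_1, \ldots, M_p$ breaks precisely at the point you identify: an edge of $M_0$ lying inside some $G_{t_i}$ (a child subtree) survives in $G \setminus F_t$ and could in principle be used by one of the $M_i$'s. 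Your proposed fixes (``delete $F_t$ only after extracting $M_0$,'' ``confine $M_0$ to a pendant region'') gesture in the right direction but are not carried out.

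The missing ingredient is the paper's short Claim, which resolves exactly this worry. First fix a single $H$-immersion expansion $M_0$ inside $G_t$, let $U$ be the set of children $t'$ of $t$ such that $M_0$ meets $V(G_{t'})$, and set
$D = E(G[X_t]) \,\cup\, \bigcup_{t' \in U} E_{\{t,t'\}}$.
Lemma~\ref{intersect} bounds $|U| \le (r+1)|E(H)|$, giving $|D| \le \omega_H(r)$ as you computed. Then observe (the Claim): since $H$ is connected and no $G_{t'}$ is infected, any $H$-immersion expansion that uses an edge of $G_{t'}$ must cross the cut $E_{\{t,t'\}}$. Consequently, an expansion $M'$ living in $G \setminus D$ avoids every cut $E_{\{t,t'\}}$ with $t' \in U$, hence avoids all of $\bigcup_{t'\in U} E(G_{t'})$; combined with $E(M') \cap D = \emptyset$ and $E(M_0) \subseteq D \cup \bigcup_{t'\in U} E(G_{t'})$, this forces $E(M') \cap E(M_0) = \emptyset$. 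That is what lets you augment any size-$p$ packing of $G \setminus D$ by $M_0$ to contradict $\pack_H(G) = p$. Without this structural claim (which critically uses both the choice of $t$ and the connectivity of $H$), your inductive step does not close; with it, your outline becomes the paper's proof.
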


\begin{proof}
Let us show by induction on $k$ that if $\pack_H(G)\leq k$  and  $\tpw(G) \leq r$
then
$\cover_H(G) \leq \omega_{H}(r) \cdot k$.

The case $k=0$ is trivial. Let us now assume that $k\geq 1$ and that for
every graph $G$ of tree-partition width at most $r$ and such that
$\pack_H(G)=k-1$, we have $\cover_H(G) \leq \omega_{H}(r) (k-1)$.
Let $G$ be a graph such that $\pack_H(G)=k$ and $\tpw(G) \leq r$.
Let also $\mathcal{D} = ((T,s), \{X_t\}_{t \in V(T)})$ be an optimal rooted
tree partition  of~$G$. We say that a vertex $t \in V(T)$ is
\emph{infected} if $G_t$ contains an~$H$-immersion expansion.
Let $t$ be an infected vertex of $T$ of minimum
height. 

\noindent {\em Claim:} If some of the $H$-immersion expansions of $G$ shares an edge  
with  $G_{t'}$ for  some child $t'$ of $t$, then it also shares 
and edge with $E_{\{t,t'\}}$. \smallskip

\noindent {\em Proof of claim:}
Let $M$ be some $H$-immersion expansions of $G$. 
Notice that, by the choice of $t$, $M$
cannot be entirely inside in  $G_{t'}$. This fact, together with the 
connectivity of $M$, implies that $E(M)\cap  E_{\{t,t'\}}\neq\emptyset$.

Suppose that $M$ be an $H$-immersion expansion of $G_t$
and let $U$ be the set of children of $t$ corresponding to bags which share
vertices with $M$. We define the multisets 
$A=E(G[X_t])$, $B=\bigcup_{t'\in U}  E_{\{t,t'\}}$
and $C=\bigcup_{t'\in U}  E(G_t')$. We also set 
$D=A\cup B$.
By the definition of $U$, it follows that 
$E(M)  \subseteq   C\cup D~{\bf (1)}.$

Let us upper-bound the size of $|D|$.
Applying Lemma~\ref{intersect} for $G_{t}$,  $H$, and $X_{t}$,
we have 
$|U| \leq (r+1) \cdot |E(H)|$, hence
$|B| \leq r (r+1) \cdot
|E(H)|$. Besides, every path of $M$ connecting two branch vertices meets
every vertex of $X_t$ at most once (as it is a path), thus $E(M)$ does
not contain an edge of $G[X_t]$ with a multiplicity larger than
$|E(H)|$. It follows that $|A| \leq
\frac{r(r-1)}{2}\cdot |E(H)|$ and finally we obtain that
$|D|=|A|\cup |B| \leq r\cdot \frac{3r +1}{2} \cdot |E(H)|=\omega_H(r).$

Let $G' = G\setminus D$. We now show that $\pack_H(G') \leq  k-1$. Let us consider an
$H$-immersion expansion $M'$ in $G'$. As  $E(M')\subseteq E(G)\setminus D$, if follows that 
$E(M')\cap D  =  \emptyset.~{\bf (2)}$.

Recall that  $B\subseteq D$, which together with~{\bf (2)} implies that $E(M')\cap B=\emptyset$. This fact, 
 combined with the claim above, implies that
$E(M')\cap C = \emptyset.~{\bf (3)}$
From~{\bf (2)} and~{\bf (3)},  we obtain that $E(M')\cap (C\cup D)=\emptyset$, which, combined with~{\bf (1)}, implies that 
$E(M)\cap  E(M)'\neq \emptyset$.
Consequently, every maximum packing of $H$-immersion expansions in $G'$ is
edge-disjoint from $M$. If such a packing had size $\geq k$, it would
form together with $M$ a packing of size $k+1$ in $G$, a
contradiction. Thus $\pack_H(G') \leq  k-1$, as desired.
By the induction hypothesis applied on $G'$, $\cover_{H}(G')\leq \omega_{H}(r) \cdot (k-1)$ edges.
Therefore, from Observation~\ref{obd4}, $\cover_{H}(G)\leq |C|+\cover_{H}(G')\leq |C|+\omega_{H}(r) \cdot (k-1)\leq \omega_{H}(r) \cdot k$ edges as required.
\end{proof}
%

\noindent We  set $\sigma:\N\rightarrow\N$ where $\sigma(r)=\left \lceil \frac{1}{8}(3(r+1)^4+2(r+1)^2)\right \rceil$.

\begin{theorem}
\label{f90op2c}
 Let  $H$ be a connected graph with at least one edge,  $r \in \N$, and $G$ be a graph  where $\tcw(G)\leq r$. 
 Then  $\cover_H(G) \leq \sigma(r)\cdot (4\cdot |V(H)|+|E(H)|)\cdot \pack_H(G)$.
\end{theorem}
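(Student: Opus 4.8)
The plan is to combine the two main reductions already established in the excerpt: the transfer from tree-cut width to tree-partition width (Theorem~\ref{gk0opo}) and the Erdős–Pósa statement for bounded tree-partition width (Theorem~\ref{o0p2kz9}). Start with a graph $G$ of tree-cut width at most $r$. First I would reduce to the connected case: if $G$ is disconnected, apply the argument componentwise and sum the bounds, using that both $\pack_H$ and $\cover_H$ are additive over connected components and that $H$ is connected (so any $H$-immersion expansion lives inside a single component). So assume $G$ is connected. Apply Theorem~\ref{gk0opo} to $G$ and $H$ to obtain a graph $G'$ with $\tpw(G')\leq (\tcw(G)+1)^2/2\leq (r+1)^2/2$, together with $\pack_{H^+}(G')\leq \pack_H(G)$ and $\cover_H(G)\leq \cover_{H^+}(G')$.

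Next I would apply Theorem~\ref{o0p2kz9} to $G'$ with the graph $H^+$ in place of $H$, since $H^+$ is connected and has at least one edge whenever $H$ does. This yields
\[
\cover_{H^+}(G') \;\leq\; \omega_{H^+}\!\bigl(\tpw(G')\bigr)\cdot \pack_{H^+}(G').
\]
Chaining the inequalities gives
\[
\cover_H(G)\;\leq\;\cover_{H^+}(G')\;\leq\;\omega_{H^+}\!\bigl((r+1)^2/2\bigr)\cdot \pack_{H^+}(G')\;\leq\;\omega_{H^+}\!\bigl((r+1)^2/2\bigr)\cdot \pack_H(G).
\]
It then remains to check that $\omega_{H^+}\!\bigl((r+1)^2/2\bigr)\leq \sigma(r)\cdot(4|V(H)|+|E(H)|)$. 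Recall $\omega_{H^+}(q)=\lceil q\cdot\frac{3q+1}{2}\cdot|E(H^+)|\rceil$. By the definition of $H^+$, each vertex $v$ of $H$ contributes two new vertices and four new edges ($\{v',v''\}$ of multiplicity $2$, plus $\{v,v'\}$ and $\{v,v''\}$), so $|E(H^+)|=|E(H)|+4|V(H)|$. Substituting $q=(r+1)^2/2$ one gets $q\cdot\frac{3q+1}{2}=\frac{1}{8}\bigl(3(r+1)^4+2(r+1)^2\bigr)$, which is exactly $\sigma(r)$ before taking the ceiling; a small amount of care with the ceilings (the product of a ceiling and an integer dominates the ceiling of the product) closes the numeric gap.

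The only genuinely delicate points are bookkeeping rather than conceptual: first, verifying that the reduction of Theorem~\ref{gk0opo}, stated for connected $G$, can be lifted to arbitrary $G$ by treating components separately — here one must note that the $\sigma(r)\cdot(4|V(H)|+|E(H)|)$ coefficient is uniform across components, so summing the per-component inequalities $\cover_H(G_j)\leq \sigma(r)\cdot(4|V(H)|+|E(H)|)\cdot\pack_H(G_j)$ yields the claim since $\tcw(G_j)\leq\tcw(G)\leq r$; and second, matching the constants so that the ceilings in $\omega_{H^+}$ and in $\sigma$ line up. I do not anticipate any real obstacle beyond these routine verifications, since all the heavy lifting is done by Theorems~\ref{gk0opo} and~\ref{o0p2kz9}.
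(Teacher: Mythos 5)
Your proposal is correct and follows essentially the same route as the paper: reduce to connected $G$, apply Theorem~\ref{gk0opo} to obtain $G'$, apply Theorem~\ref{o0p2kz9} to $G'$ with $H^+$, and chain the inequalities together with the computation $|E(H^+)| = 4|V(H)| + |E(H)|$ and $q\cdot\frac{3q+1}{2}|_{q=(r+1)^2/2} = \frac{1}{8}(3(r+1)^4+2(r+1)^2)$. If anything, you are a bit more careful than the paper in handling the ceiling (the paper asserts $\omega_{H^+}((r+1)^2/2) = \sigma(r)\cdot|E(H^+)|$, whereas only the inequality $\lceil ab\rceil \leq \lceil a\rceil\, b$ for integer $b$ holds in general, which you correctly note and which is all that is needed).
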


\begin{proof} Clearly, we can assume that $G$ is connected, otherwise we work on each of its connected components separately.
By Theorem~\ref{gk0opo}, there is a graph $G'$ where 
 $\tpw(G') \leq (r+1)^{2}/2$, $\pack_{H^+}(G') \leq \pack_H(G)$ and
  $\cover_{H}(G) \leq \cover_{H^+}(G')$. The result follows as, from Theorem~\ref{o0p2kz9},
 $ \cover_{H^+}(G')\leq \omega_{H^+}((r+1)^{2}/2) \cdot \pack_{H^+}(G')$
 and $\omega_{H^+}((r+1)^{2}/2)=
\sigma(r)\cdot |E(H^+)|\leq 
\sigma(r)\cdot (4\cdot |V(H)|+|E(H)|)$. 
\end{proof}

\section{Erdős-Pósa for immersions of subcubic planar graphs}
\label{hj3p0}

\paragraph{\bf \bf Grids and Walls.} Let $k$ and $r$ be positive integers where $k,r\geq 2$. The
\emph{$(k\times r)$-grid} $\Gamma_{k,r}$ is the Cartesian product of two paths of
lengths $k-1$ and $r-1$ respectively. We denote by $\Gamma_{k}$ the $(k\times k)$-grid.
The {\em $k$-wall} $W_{k}$ is the graph obtained from a 
$((k+1)\times (2\cdot k+2))$-grid with vertices $(x,y)$,
$x\in\{1,\dots,k+1\}$, $y\in\{1,\dots,2k+2\}$, after the removal of the
``vertical'' edges $\{(x,y),(x+1,y)\}$ for odd $x+y$, and then the removal of
all vertices of degree 1.

\begin{figure}[h]
\begin{center}
\scalebox{0.6}{\includegraphics{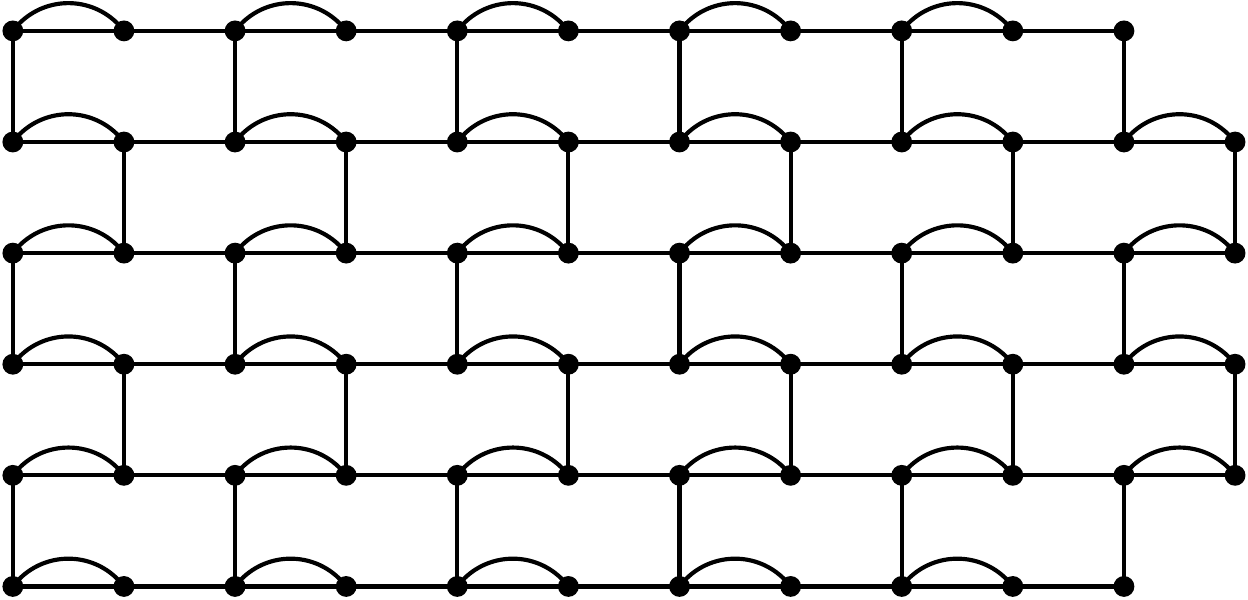}}
\end{center}
\caption{The graph $W^{+}_{5}$.}
\label{fig:enriched-wall}
\end{figure}

Let $W_{k}$ be a wall. We denote by $P^{(v)}_{j}$ the shortest path connecting vertices $(1,2j)$ and $(k+1,2j)$, $j\in [k]$ and call 
these paths the {\em vertical paths of $W_{k}$}, with the assumption that $P^{(v)}_{j}$ contains only vertices $(x,y)$ with $y=2j,2j-1$. 
Note that these paths are vertex-disjoint.
Similarly, for every $i\in [k+1]$ we denote by $P^{(h)}_{i}$ the shortest path connecting vertices $(i,1)$ and $(i,2k+2)$ (or $(i,2k+1)$ if $(i,2k+2)$ has been removed) 
and call these paths the {\em horizontal paths of $W_{k}$}.
Let $E=\{e\mid e\in E(P^{(v)}_{j})\cap E(P^{(h)}_{i}), j\in \{1,2,\dots,k\}, i\in \{1,2,\dots,k+1\}\}$. 
We obtain $W^{+}_{k}$ by $W_{k}$ by adding a second copy of every edge in $E$. (For an example, see Figure~\ref{fig:enriched-wall}.)

\paragraph{\bf Strong immersions.}

If in the definition on when a graph $G$ contains a graph $H$ as an immersion we 
additionally demand  that no branch vertex is an internal vertex of any
certifying path, then the function $(\phi,\psi)$ is an $H$-{\em strong-immersion model} and we say that $G$ contains $H$ as a strong immersion.

\begin{figure}[h]
\begin{center}
\scalebox{0.6}{\includegraphics{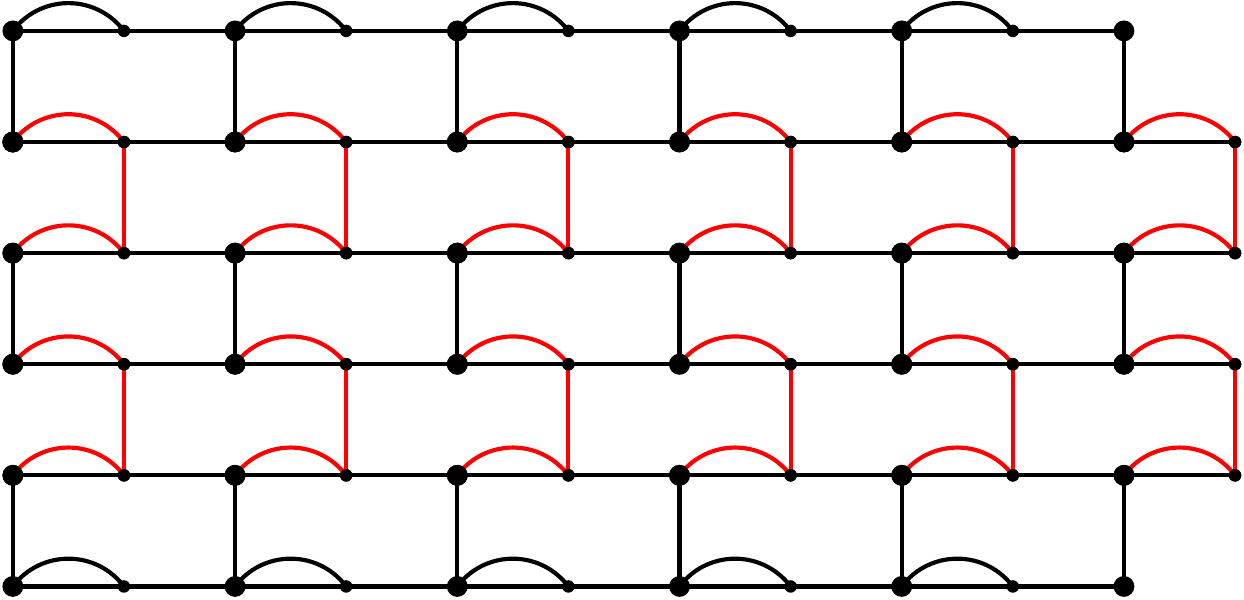}}
\end{center}
\caption{Finding a subdivision of $\Gamma_{k}$ as a strong immersion in $W^{+}_{k}$.}
\label{fig:grid-wall-im}
\end{figure}

\begin{observation}\label{argerglklk}
For every integer $k\geq 2$, there is a $\Gamma_{k}$-strong-immersion model $(\phi,\psi)$ in $W^{+}_{k}$ 
such that $\phi(V(\Gamma_{k}))= \{(i,2j+1)\mid 1\leq i\leq k+1, 0\leq j\leq k\}$ is the set of its branch vertices.
\end{observation}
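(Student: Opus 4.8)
The wall $W_k^+$ is, by construction, the wall $W_k$ with a doubled copy of each edge that lies on an intersection of a horizontal and a vertical path. I need to produce a $\Gamma_k$-strong-immersion model whose branch vertices are exactly the vertices $(i,2j+1)$ for $1\le i\le k+1$ and $0\le j\le k$; note there are $(k+1)^2$ of them, matching $|V(\Gamma_k)|$, and they form a $(k+1)\times(k+1)$ pattern inside the wall.

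**Plan.** The plan is to exhibit $\phi$ and $\psi$ explicitly and then check the two conditions (edge-disjointness of the certifying paths, and that no branch vertex is internal to a certifying path). First I would set $\phi((i,j')) = (i, 2j'+1-1)$... more precisely, identify the grid vertex in row $i$ and column $j$ (with $1\le i,j\le k+1$) with the wall vertex $(i, 2j-1)$; these are exactly the claimed branch vertices. For the \emph{horizontal} grid edges — those joining $(i,2j-1)$ to $(i,2j+1)$ in the wall — I would route $\psi$ along the subpath of the horizontal path $P^{(h)}_i$ between these two vertices, which has length two (through the intermediate vertex $(i,2j)$). For the \emph{vertical} grid edges — joining $(i,2j-1)$ to $(i+1,2j-1)$ — I would route $\psi$ along the portion of the vertical path $P^{(v)}_j$ between rows $i$ and $i+1$; depending on the parity this is either a single wall edge $\{(i,2j-1),(i+1,2j-1)\}$ or a detour of length three through the neighbouring column (the ``brick'' side of the wall), but in either case it stays within $P^{(v)}_j$.

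**Verifying the conditions.** The key point is edge-disjointness. Horizontal certifying paths in distinct rows use edges of distinct $P^{(h)}_i$'s, and within a row the length-two pieces between consecutive branch vertices are internally disjoint, so horizontal paths are mutually edge-disjoint; symmetrically for the vertical ones. The only possible conflict is between a horizontal and a vertical certifying path, and this can happen only at an edge $e$ lying on both some $P^{(h)}_i$ and some $P^{(v)}_j$ — but these are exactly the edges that were \emph{doubled} in forming $W_k^+$, so one copy can be assigned to the horizontal path and the other to the vertical path. This is precisely why the enrichment $E$ in the definition of $W_k^+$ consists of exactly those intersection edges. The strong-immersion condition is then immediate: each certifying path has length at most three and its internal vertices are wall vertices of the form $(i,2j)$ (even second coordinate) or the odd-coordinate vertices used by the brick detours, none of which is of the form $(i',2j'+1)$ with the right parity to be a branch vertex — so no branch vertex lies in the interior of any certifying path.

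**Main obstacle.** The only genuinely fiddly part is the parity bookkeeping for the vertical grid edges: because the wall $W_k$ is obtained from the grid by deleting the vertical edges $\{(x,y),(x+1,y)\}$ with $x+y$ odd, roughly half of the desired vertical connections are present as a single edge and the other half must detour one column over. I would handle this by a short case analysis on the parity of $i+j$, in each case pointing to the concrete length-one or length-three subpath of $P^{(v)}_j$ (and noting the detour stays inside the vertical path, hence does not interfere with any horizontal path except at doubled intersection edges). Once this case analysis is written out, edge-disjointness and the strong-immersion property follow as described, completing the proof; a picture such as Figure~\ref{fig:grid-wall-im} makes the routing transparent.
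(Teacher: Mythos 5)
Your construction is precisely what the paper's Figure~\ref{fig:grid-wall-im} depicts, and since the paper offers only that figure in lieu of a written argument, your formalization (horizontal grid edges along $P^{(h)}_i$, vertical grid edges along the zigzag $P^{(v)}_j$, with conflicts resolved by the duplicated intersection edges, and the strong-immersion property following because internal vertices of certifying paths lie in even columns while branch vertices lie in odd ones) is a sound rendering of the same argument. One wrinkle worth noting is inherited from the paper rather than introduced by you: the displayed set has $(k+1)^2$ elements whereas $\Gamma_k$ as defined has $k^2$ vertices, so under the literal reading your $P^{(v)}_{k+1}$ is not among the paper's named vertical paths — though the zigzag still exists in columns $2k+1,2k+2$ and no horizontal certifying path reaches past column $2k+1$, so no actual conflict arises there.
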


Figure~\ref{fig:grid-wall-im} illustrates how we may find a subdivision of $\Gamma_{k}$ as a strong immersion in~$W^{+}_{k}$.
We also need the following result.

\begin{lemma}[\cite{Kant96}]\label{aergae}
Every simple planar subcubic graph of $n$ vertices is a topological minor of the $\lfloor \frac{n}{2}\rfloor$-grid.
\end{lemma}

\begin{lemma}\label{lem:subcubicplanar}
Every connected planar subcubic graph $H$  is an immersion of  the wall $W_{|V(H)|}$.
\end{lemma}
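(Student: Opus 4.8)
The plan is to reduce the problem to the grid embedding result of Lemma~\ref{aergae} (Kant's theorem), passing through walls via the strong-immersion model of Observation~\ref{argerglklk}. First I would handle the degenerate case: if $H$ has at most one edge then it is (a subgraph of) $\theta_2$ or a single edge, which is trivially an immersion of any wall $W_k$ with $k \geq 2$, since a wall on a positive number of edges certainly contains a path of length one. So assume $H$ has at least two edges, hence $|V(H)| \geq 2$; set $n = |V(H)|$ and note $n \geq 2$, so $W_n$ is well-defined.

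\textbf{Key steps.} The core of the argument is the chain
\[
H \ \preceq_{\text{top}}\ \Gamma_{\lfloor n/2 \rfloor} \ \preceq_{\text{imm}}\ W_n.
\]
For the first relation I would invoke Lemma~\ref{aergae}: since $H$ is simple, planar, and subcubic on $n$ vertices, it is a topological minor of the $\lfloor n/2\rfloor$-grid $\Gamma_{\lfloor n/2\rfloor}$. A topological-minor model is in particular an immersion model whose certifying paths are internally disjoint from one another \emph{and} from the branch vertices — that is, $H$ is a (strong) immersion of $\Gamma_{\lfloor n/2\rfloor}$, and hence also of $\Gamma_n$ (a grid of larger side contains a smaller grid as a subgraph, hence as an immersion). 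For the second relation I would use Observation~\ref{argerglklk}: there is a $\Gamma_n$-strong-immersion model in $W_n^+$, and since $W_n^+$ is obtained from $W_n$ only by \emph{duplicating} certain edges, a $\Gamma_n$-immersion in $W_n^+$ yields a $\Gamma_n$-immersion in $W_n$ as soon as no certifying path of the model actually uses a doubled copy of an edge more than the single copy present in $W_n$ allows; but the doubled edges are precisely the edges lying on both a vertical and a horizontal path of the wall, and in the explicit model of Observation~\ref{argerglklk} the branch set is $\{(i,2j+1)\}$ and the certifying paths run along wall-paths in a way that each physical edge of $W_n$ is used at most once. Composing the two immersion models (immersions are transitive: substitute the certifying paths of the $\Gamma_n$-in-$W_n$ model into the certifying paths of the $H$-in-$\Gamma_n$ model, which stays edge-disjoint because the inner model is edge-disjoint and the outer model's paths are internally disjoint) gives an $H$-immersion model in $W_n = W_{|V(H)|}$, as required.

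\textbf{Main obstacle.} The step I expect to require the most care is verifying that the $\Gamma_n$-strong-immersion in $W_n^+$ of Observation~\ref{argerglklk} can be realized \emph{inside $W_n$ itself} — i.e.\ that the edge-duplications in $W_n^+$ are not genuinely needed for the \emph{grid} (as opposed to the enriched structure one might want for the packing/covering arguments elsewhere). Concretely one must check that in the model of Observation~\ref{argerglklk} every certifying path, when traced through $W_n$, traverses each edge of $W_n$ at most once across \emph{all} certifying paths simultaneously; Figure~\ref{fig:grid-wall-im} indicates this is so, since the vertical certifying segments of the grid use the ``brick wall'' detours and the horizontal ones use horizontal paths, and these overlap only at branch vertices, not along shared edges. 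If, contrary to this, the natural model does reuse a wall edge, the fallback is to replace $W_n$ by $W_{n+1}$ (or a constant-factor larger wall) and route the grid through a sub-wall with enough slack — this only changes the statement by a constant factor in the wall size, which is harmless for all downstream uses, but I would first try to get the clean bound $W_{|V(H)|}$ as stated. A secondary, purely bookkeeping point is the transitivity-of-immersions composition: one must confirm edge-disjointness is preserved, which follows because the outer ($H$-in-$\Gamma_n$) paths are internally vertex-disjoint, so substituting edge-disjoint path-bundles for their edges keeps the union edge-disjoint.
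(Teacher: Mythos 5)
Your proof breaks at the second link of your chain $H\preceq_{\mathrm{top}}\Gamma_{\lfloor n/2\rfloor}\preceq_{\mathrm{imm}}W_n$. It is \emph{not} the case that $\Gamma_k$ is an immersion of $W_k$ (or of any wall): by Observation~\ref{branches} an immersion model preserves multidegree at branch vertices, but every interior vertex of $\Gamma_k$ has degree~$4$ while every vertex of $W_k$ has degree at most~$3$. This is exactly why the paper introduces $W_k^+$, whose doubled edges supply the missing degree, and why Observation~\ref{argerglklk} is phrased for $W_k^+$ rather than $W_k$. Your ``Main obstacle'' paragraph speculates that the explicit model of Observation~\ref{argerglklk} might not genuinely use the doubled edges, but it must; and the fallback of enlarging the wall does not help, since the degree obstruction persists for walls of every size. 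The missing idea is that one must pass from $\Gamma_{\lfloor n/2\rfloor}$ back to the \emph{subcubic} subgraph $H''$ (the $H$-topological-minor expansion inside the grid) \emph{before} trying to discard the doubled edges. Because $H''$ is a subdivision of the subcubic graph $H$, its branch vertices have degree at most~$3$ in the induced model in $W^+$, and the paper then does a local rerouting (``swapping branch vertices,'' Figure~\ref{fig:simple-wall-ex}) to eliminate parallel edges from the model. Subcubicity of $H$ is therefore not incidental to this lemma; it is the load-bearing hypothesis that makes the passage from $W^+$ to $W$ possible.

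A second, smaller gap: you apply Lemma~\ref{aergae} (Kant) directly to $H$ under the assertion that ``$H$ is simple,'' but the paper's graphs explicitly allow multiedges, so $H$ need not be simple. The paper first builds a simple graph $H'$ by subdividing all but one copy of each multiedge, bounds $|V(H')|\le 2|V(H)|$ using connectivity and the subcubic edge count, and only then invokes Kant on $H'$. Without this preprocessing step your invocation of Lemma~\ref{aergae} is not licensed, and without the $2|V(H)|$ bound you do not recover the stated wall size $W_{|V(H)|}$.
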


\begin{proof}
Let $H$ be a connected planar subcubic graph and let $H'$ be the simple subcubic planar graph obtained from $H$ by subdiving
all but one copies of every multiple edge. Since $H$ is connected we may assume that at least $|V(H)|-1$ edges do not get subdivided.
Noice that $2|E(H)|\leq 3|V(H)|$ and thus $|E(H)|\leq \frac{3}{2}|V(H)|$. Since we add at most $|E(H)|-(V(H)-1)$ vertices to obtain $H'$ from $H$, 
it follows that 
\begin{equation} |V(H')|\leq 2|V(H)|.
\end{equation} 
Let $n=|V(H')|$. By definition, we obtain that $H$ is a topological minor of $H'$ and Lemma~\ref{aergae} yields that $H'$ is a topological minor of
$\Gamma_{\lfloor \frac{n}{2} \rfloor}$. Then $H$ is a topological minor of $\Gamma_{\lfloor \frac{n}{2} \rfloor}$ and let $(\phi,\psi)$
be an $H$-topological-minor model in $\Gamma_{\lfloor \frac{n}{2} \rfloor}$.
Let $H''$ denote the $H$-immersion expansion in $\Gamma_{\lfloor \frac{n}{2} \rfloor}$.
Notice that since $H$ is planar subsubic so is $H''$.

Let $(\phi',\psi')$ be the $\Gamma_{\lfloor \frac{n}{2} \rfloor}$-strong-immersion model in $W^{+}_{\lfloor \frac{n}{2} \rfloor}$
where $$\phi'(V(\Gamma_{\lfloor \frac{n}{2} \rfloor}))\subseteq \{(i,2j+1)\mid 1\leq i\leq \lfloor \frac{n}{2} \rfloor+1, 0\leq j\leq \lfloor \frac{n}{2} \rfloor\}$$ (Observation~\ref{argerglklk}) and notice that its restriction to $H''$ yields an
$H''$-strong-immersion model in $W^{+}_{\lfloor \frac{n}{2} \rfloor}$. 
Let $W'$ be the $H''$-immersion expansion in $W^{+}_{\lfloor \frac{n}{2} \rfloor}$.

To prove our lemma it is enough to show that $W'$ contains a strong immersion model of $H''$ such that its $H''$-immersion expansion $W$ is simple as then 
$W\subseteq W_{\lfloor \frac{n}{2} \rfloor}$.

\begin{figure}[h]
\begin{center}
\scalebox{0.6}{\includegraphics{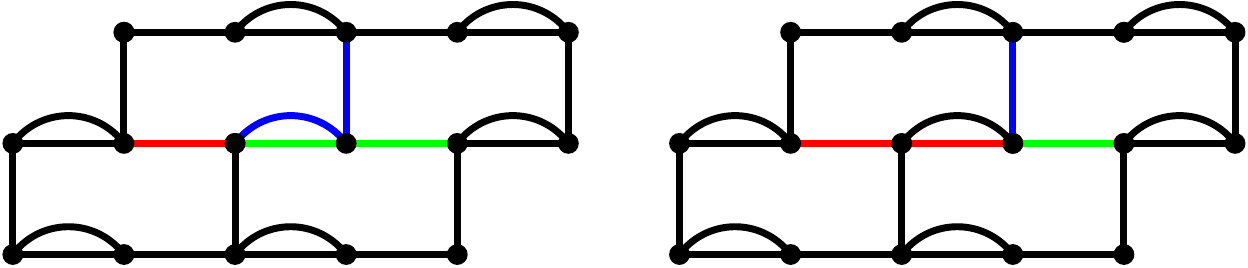}}
\end{center}
\caption{Swapping branch vertices.}
\label{fig:simple-wall-ex}
\end{figure}

We first show that if $W'$ contains a vertex $z$ of $\phi'(V(\Gamma_{\lfloor \frac{n}{2} \rfloor}))$ then $z\in \phi'(V(H''))$.
Let us assume, to the contrary, that $W'$ contains a vertex $z$ such that $z\in \phi'(V(\Gamma_{\lfloor \frac{n}{2} \rfloor}))\setminus \phi'(V(H''))$.
Then $z$ is internal to one of the certifying paths of the $H''$-immersion expansion in $W^{+}_{\lfloor \frac{n}{2} \rfloor}$. However, by definition,
this path corresponds to an edge $e$ of $\Gamma_{\lfloor \frac{n}{2} \rfloor}$ and is also a certifying path of $e$ in the 
$\Gamma_{\lfloor \frac{n}{2} \rfloor}$-immersion expansion in $W^{+}_{\lfloor \frac{n}{2} \rfloor}$. As the path contains internally a branch vertex of
$\Gamma_{\lfloor \frac{n}{2} \rfloor}$ we end up to a contradiction since we considered a 
$\Gamma_{\lfloor \frac{n}{2} \rfloor}$-strong-immersion model in $W^{+}_{\lfloor \frac{n}{2} \rfloor}$.

Let now $E$ be the set $\{e=\{u_{e},v_{e}\}\mid \text{two copies of } e \text{ belong to } W'\}$.
Notice that for every $e\in E$, one of its endpoints, say $u_{e}$ belongs to $\phi'(V(\Gamma_{\lfloor \frac{n}{2} \rfloor}))$. Then, it also holds that $u_{e}\in \phi'(V(H''))$.
 Recall that for every $e\in E$ the degree of $u_{e}$ in $W'$ is at most 3 (as it is an $H''$-immersion expansion). Since we are working on a strong immersion model of $H''$ 
all edges incident to $u_{e}$ belong to paths joining $u_{e}$ to other branch vertices. Then notice that by mapping each verger $\phi'^{-1}(u_{e})$ to $v_{e}$, 
$e\in E$ we may find a strong immersion model of $H''$ where the corresponding 
immersion expansion does not  contain multiple edges (Figure~\ref{fig:simple-wall-ex}).
Thus we obtain a strong immersion model of $H''$ in $W'$ such that its $H''$-immersion expansion is simple. 
As $n\leq 2|V(H)|$, we obtain that $H$ is a strong immersion of $W_{|V(H)|}$.
\end{proof}

By combining~\cite[Theorem 7]{Wollan15} with the main result of~\cite{Chuzhoy16impr} (see also~\cite{Chuzhoy15grid}) we can readily obtain the following.

\begin{theorem}\label{thm:gareggaer}
There  is a function $\tau:\N^{+}\rightarrow\N$ such that the following holds:
for every graph $G$ and $r\in\N^{+}$, if $\tcw(G)\geq f(r)$ then $W_{r}$  is an immersion of $G$. Moreover,   $f(r)=O(r^{29}\polylog(r))$.
\end{theorem}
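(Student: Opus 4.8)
The final statement is Theorem~\ref{thm:gareggaer}, which asserts the existence of a function $\tau$ (written $f$ in the statement) such that large tree-cut width forces a $W_r$-immersion, with $f(r) = O(r^{29}\polylog(r))$. The plan is to combine two external ingredients in sequence, using the intermediate notion of a large grid as an immersion.

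\medskip

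First I would invoke \cite[Theorem 7]{Wollan15}, which states (in the terminology of that paper) that there is a function $g$ such that any graph $G$ with $\tcw(G) \geq g(t)$ contains the wall $W_t$, or equivalently a large grid, as an immersion. More precisely, the relevant form of Wollan's result bounds tree-cut width in terms of the largest wall-immersion: if $G$ has no $W_t$-immersion then $\tcw(G)$ is at most some polynomial in $t$. However, the bound coming directly from \cite{Wollan15} is a fixed polynomial whose exponent is not optimized, so to get the sharp $O(r^{29}\polylog r)$ one does not apply it at the ``wall level'' but at the ``grid level'': Wollan's argument (or its refinement) yields that bounded tree-cut width is implied by the absence of a bounded-size \emph{grid} minor/immersion, with the grid size entering polynomially. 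The second step is then to feed in the main result of \cite{Chuzhoy16impr} (the polynomial grid-minor theorem, with the improvement over \cite{Chuzhoy15grid}), which says that a graph with no $k\times k$ grid minor has treewidth $O(k^9 \polylog k)$ — equivalently, treewidth $w$ forces a grid minor of order $\Omega((w/\polylog w)^{1/9})$. Chaining: treewidth is at most tree-cut width up to a polynomial factor (indeed $\tw(G) \leq f(\tcw(G))$ for a polynomial $f$, a fact proved in \cite{Wollan15}), so large $\tcw$ forces large $\tw$, which by Chuzhoy forces a large grid minor, which in a subcubic-friendly way gives a large grid — hence wall — immersion.

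\medskip

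Concretely, the computation I would carry out is bookkeeping of exponents. Suppose $\tcw(G) \geq f(r)$. Using the polynomial relation $\tw(G) = \Omega(\tcw(G)^{\alpha})$ for the appropriate constant $\alpha$ from \cite{Wollan15} (the two parameters are polynomially tied, $\tcw \le O(\tw^2 \cdot \Delta)$-type bounds reversed), we get $\tw(G)$ large; then \cite{Chuzhoy16impr} extracts a grid minor of order $\Omega((\tw(G)/\polylog)^{1/9})$; a grid minor in a graph of this form yields, after routing, a wall immersion of comparable order since walls have maximum degree $3$ and a grid minor model can be converted to a wall topological-minor (hence immersion) model with only a constant-factor loss in size. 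Tracking the exponents so that the order of the extracted wall is $\geq r$ pins down $f(r) = O(r^{29}\polylog r)$; the exponent $29$ presumably arises as roughly $9 \cdot (\text{a small power})$ coming from composing Chuzhoy's exponent $9$ with the polynomial conversion between $\tw$ and $\tcw$ established in \cite{Wollan15}. The point is that each of these is a known quantitative statement, and the theorem is just their composition.

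\medskip

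The main obstacle — really the only nontrivial point — is making sure the conversion from a grid \emph{minor} to a wall \emph{immersion} is clean and incurs only the claimed polynomial loss, and that the parameter translations among $\tcw$, $\tw$, and grid size are quoted in the right direction with the right exponents so that the advertised $O(r^{29}\polylog r)$ actually comes out. Since the statement explicitly says ``By combining~\cite[Theorem 7]{Wollan15} with the main result of~\cite{Chuzhoy16impr} \dots we can readily obtain the following,'' the intended proof is exactly this two-line composition: no new idea is needed, only the careful substitution of the quantitative bounds from the two cited results, and I would present it as such rather than reproving either ingredient.
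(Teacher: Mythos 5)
Your high-level plan --- quote Wollan's Theorem~7 and plug in the improved excluded-grid bound of Chuzhoy --- is exactly what the paper does (it gives no further proof), but the concrete chain you commit to in your ``bookkeeping'' paragraph is broken at its first link. The relation established in \cite{Wollan15} (see also \cite{ganian2015}) is of the form $\tw(G)\le O(\tcw(G)^2)$, i.e.\ \emph{large treewidth forces large tree-cut width}; you invert it into ``large $\tcw$ forces large $\tw$,'' which is false. For instance, a path in which every consecutive pair of vertices is joined by $k$ parallel edges has treewidth $1$, yet it contains $W_{\Omega(k)}$ as an immersion (the wall has cutwidth $O(k)$, so its vertices can be placed along the path and its edges routed through the parallel edges edge-disjointly), and since tree-cut width is immersion-monotone its tree-cut width grows unboundedly with $k$. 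Hence the route $\tcw \Rightarrow \tw \Rightarrow$ grid minor $\Rightarrow$ wall immersion collapses precisely on the case that makes this a theorem about immersions rather than minors: graphs whose large tree-cut width comes from high edge multiplicities and large edge cuts rather than from any large grid minor. Dealing with that case (turning large adhesions into edge-disjoint linkages and ultimately a wall immersion) is the actual content of Wollan's Theorem~7, and it cannot be bypassed through treewidth.

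The intended proof is the literal composition: Wollan's theorem is stated with a bound expressed through the excluded-grid function $w(\cdot)$ (every graph of treewidth at least $w(g)$ has a $g$-grid minor), and one substitutes the bound $w(g)=O(g^{19}\polylog g)$ of \cite{Chuzhoy16impr}; combined with the extra polynomial overhead already present in Wollan's statement this yields $f(r)=O(r^{29}\polylog r)$. Two smaller points: the $O(g^{9}\polylog g)$ bound you attribute to \cite{Chuzhoy16impr} is a later improvement and not what the paper cites, so your guess that $29\approx 9\cdot(\text{small power})$ does not reflect where the exponent comes from; and your conversion ``grid minor $\Rightarrow$ wall topological minor $\Rightarrow$ wall immersion'' is fine as far as it goes (the wall has maximum degree $3$), but it only applies once a grid minor has been secured, which your argument does not accomplish in the high-multiplicity regime.
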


\begin{lemma}
\label{y92ml1}
Let $G$ be a graph and let $H$ be a connected planar subcubic graph on $h$ vertices. Then $\tcw(G)=O(h^{29}\cdot (\pack_{{\cal I}(H)}(G))^{14.5}\cdot(\polylog(h)+\polylog(\pack_{{\cal I}(H)}(G)))$.
\end{lemma}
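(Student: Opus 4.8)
The plan is to derive a contrapositive-style bound: assume that $\tcw(G)$ is large and conclude that $\pack_{{\cal I}(H)}(G)$ must be large, which rearranges into the claimed inequality. First I would fix $r$ as a parameter to be chosen later and invoke Theorem~\ref{thm:gareggaer}: if $\tcw(G)\geq f(r)$ with $f(r)=O(r^{29}\polylog(r))$, then $G$ contains the wall $W_r$ as an immersion. I would then apply Lemma~\ref{lem:subcubicplanar}, which tells us that $H$ is an immersion of $W_{|V(H)|}=W_h$; since the immersion relation is transitive and since $W_r$ for $r\geq h$ contains $W_h$ as a subgraph (hence as an immersion), a copy of $W_r$ inside $G$ yields a copy of $H$ as an immersion in $G$ as well. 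The key combinatorial point is that a single large wall $W_r$ actually contains many vertex-disjoint (hence edge-disjoint) copies of the smaller wall $W_h$: partitioning the $r$-wall into a grid-like array of roughly $(r/h)^2$ disjoint subwalls $W_h$ gives $\Omega((r/h)^2)$ edge-disjoint $H$-immersion expansions, so $\pack_{{\cal I}(H)}(G) = \Omega((r/h)^2)$.

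Next I would solve for $r$ in terms of the packing number. Writing $p = \pack_{{\cal I}(H)}(G)$, the previous step gives, for every $r$ with $\tcw(G)\geq f(r)$, the bound $p = \Omega((r/h)^2)$, i.e. $r = O(h\sqrt{p})$. Taking $r$ to be the largest value with $f(r)\leq \tcw(G)$ (so that $f(r+1) > \tcw(G)$, giving $\tcw(G) = O(f(r+1)) = O(r^{29}\polylog(r))$) and substituting $r = O(h\sqrt p)$ yields
\[
\tcw(G) = O\!\bigl((h\sqrt p)^{29}\,\polylog(h\sqrt p)\bigr) = O\!\bigl(h^{29}\, p^{14.5}\,(\polylog(h)+\polylog(p))\bigr),
\]
which is exactly the claimed statement after noting $p = \pack_{{\cal I}(H)}(G)$ and that $h$ here denotes $|V(H)|$ as in the lemma's hypothesis. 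One should also handle the trivial regime where $\tcw(G) < f(2)$ (a constant depending only on $h$), in which case the bound holds vacuously since the right-hand side is at least a constant times $h^{29}$.

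The main obstacle I expect is making the "many disjoint subwalls" step fully rigorous: one must verify that $W_r$ genuinely contains $\Theta((r/h)^2)$ pairwise vertex-disjoint subgraphs each isomorphic to (or containing as a subgraph) $W_h$, which requires carefully carving the brick-wall structure into a grid of independent bricks and checking the parity/boundary conditions of the wall definition (the removal of degree-one vertices, the staggered vertical edges). A clean way to avoid fiddling with exact wall arithmetic is to instead use the grid: $W_r^+$ strongly-immerses a subdivision of $\Gamma_r$ (Observation~\ref{argerglklk}), the $r$-grid $\Gamma_r$ obviously contains $\lfloor r/(2h)\rfloor^2$ disjoint copies of $\Gamma_{2h}$, and each such grid topologically contains $W_h$ (hence $H$) by the same argument as in Lemma~\ref{lem:subcubicplanar}; disjointness of the grid pieces gives edge-disjointness of the resulting $H$-immersion expansions in $G$. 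Either route gives the quadratic gain in the packing number that drives the $p^{14.5}$ exponent, and the rest is the routine algebra of inverting $f$ and absorbing polylog factors.
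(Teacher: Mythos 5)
Your proposal is correct and follows essentially the same route as the paper's proof: invoke Theorem~\ref{thm:gareggaer} to extract a large wall from large tree-cut width, carve it into $\Theta((r/h)^2)$ vertex-disjoint subwalls of height $h$, use Lemma~\ref{lem:subcubicplanar} to get an edge-disjoint $H$-immersion packing, and invert $f$ to recover the bound. The only cosmetic difference is that the paper phrases the argument as a direct contradiction with $r=(h+1)\lceil(k+1)^{1/2}\rceil$ rather than your contrapositive framing, and it asserts the disjoint-subwall decomposition without the grid-based detour you sketch as a fallback.
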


\begin{proof}
Let $\pack_{H}(G)\leq k$. Let $g(h,k)=f((h+1)\cdot \lceil(k+1)^{1/2}\rceil)$, 
where $f$ is the function of Theorem~\ref{thm:gareggaer}.
Suppose that $\tcw(G)\geq g(h)$. Then, from Theorem~\ref{thm:gareggaer}, we obtain that $G$ contains the wall $W$ of height 
$(h+1)\cdot \lceil(k+1)^{1/2}\rceil$ as an immersion. Notice that $W$ contains $k+1$ vertex-disjoint walls $W_{1},W_{2},\dots, W_{k+1}$ of height $h$.
From Lemma~\ref{lem:subcubicplanar}, each one of these walls contains $H$ as an immersion and thus an $H$-immersion expansion. Since, these walls
are vertex-disjoint they are also edge-disjoint. Hence, we have found a packing of $H$ of size $k+1>k$, a contradiction.
Therefore, $\tcw(G)\leq g(h,k)$. Notice now that, from Theorem~\ref{thm:gareggaer},
$g(h,k)=O(h^{29}k^{14.5}(\polylog (h)+\polylog (k))$ as required.
\end{proof}

The edge version of Theorem~\ref{main4u} follows as a corollary of~Theorem~\ref{f90op2c} and Lemma~\ref{y92ml1}.

%

\section{The vertex case}
\label{n9dh9w}

To prove the vertex version of Theorem~\ref{main4u}, is a much easier task. For this, we follow the same methodology
by using the graph parameter of treewidth instead of  tree-cut width, and topological minors instead of immersions.

\paragraph{\bf Treewidth.}
A graph $H$ is {\em $k$-chordal} if it does not contain any induced cycle of length at least $4$ and no clique one more than $k+1$ vertices. The {\em treewidth} 
of a graph $G$ is the minimum $k$ for which $G$ is a subgraph of a $k$-chordal graph.

\paragraph{\bf Topological Minors.}
Let $H$ and $G$ graphs. Similarly to the definition of immersions, we say that $G$ contains $H$ as a
\emph{topological minor} if there is pair of functions $(\phi, \psi)$, called
$H$-\emph{topological-minor model} such that $\phi$ is an injection of $V(H) \to V(G)$ and
$\psi$ sends $\{u,v\}_i$ to a path of $G$ between
$\phi(u)$ and $\phi(v)$, for every $\{u,v\}\in E(H)$ and every $i \in
\intv{1}{\mult_H(\{u,v\})}$, in a way such that
distinct edges are sent to internally vertex-disjoint paths. Every vertex in the image of $\phi$
is called a \emph{branch vertex}. Observe that if $(\phi,\psi)$ is an $H$-topological-minor model in $G$ then 
$(\phi,\psi)$ is  an $H$-strong-immersion model in $G$.

For the proof of the vertex case of Theorem~\ref{main4u},
 we require the following two ``vertex counterparts'' 
of Theorem~\ref{f90op2c} and~Lemma~\ref{y92ml1} respectively.

\begin{proposition}
\label{ropjkg}
Let ${\cal H}$ be a class of connected graphs and let $t$ be a non-negative  integer.
Then ${\cal H}$ has the {\sf v}-E{\sf \footnotesize  \&}P property for the graphs of treewidth at most $t$
with a gap that is a polynomial function on $t$.
\end{proposition}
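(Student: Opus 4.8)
The plan is to mimic the edge-case argument but in the ``vertex world'', replacing tree-cut width by treewidth, immersions by topological minors, and edge-disjointness by vertex-disjointness. First I would observe that a topological-minor model of a graph in $\mathcal{H}$ is, in particular, a strong-immersion model (as remarked just above the statement), so the relevant objects to pack and cover are the $H$-topological-minor expansions for $H \in \mathcal{H}$, and a cover will be a set of \emph{vertices} meeting all of them. The first step is the treewidth analogue of Theorem~\ref{o0p2kz9}: if $G$ has treewidth at most $t$ and contains no $k+1$ vertex-disjoint topological-minor expansions of graphs in $\mathcal{H}$, then $G$ has a vertex cover of size $O(\mathrm{poly}(t)\cdot k)$. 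I would prove this by induction on $k$ exactly as in Theorem~\ref{o0p2kz9}: take an optimal rooted tree decomposition, let $t^\star$ be an ``infected'' node of minimum height (i.e.\ $G_{t^\star}$ contains such an expansion), and argue that any expansion meeting $G_{t'}$ for a child $t'$ of $t^\star$ must pass through the bag $X_{t^\star}$ — here the bag $X_{t^\star}$ is a \emph{vertex} separator of bounded size, which is exactly what one needs, and this is cleaner than the edge case since in a tree decomposition the bag literally separates. Deleting $X_{t^\star}$ together with $O(\mathrm{poly}(t))$ further bag vertices (to kill everything living in the subtree below, using a Lemma~\ref{intersect}-style bound on how many components of $G\setminus X_{t^\star}$ a single expansion can touch, which holds verbatim for topological-minor models since certifying paths are paths) drops the packing number by one, and one applies the vertex analogue of Observation~\ref{obd4}, namely $\cover_{\mathcal{H}}(G)\le \cover_{\mathcal{H}}(G\setminus S)+|S|$ for $S\subseteq V(G)$, to close the induction.

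The second ingredient is a win/win based on the grid/wall theorem: either $\tw(G)$ is bounded by a polynomial in $t$ and the packing number, or $G$ contains a large wall as a topological minor and hence — since every connected planar subcubic graph in our class, indeed every graph we care about, is a topological minor of a suitable wall (Lemma~\ref{lem:subcubicplanar} together with the fact that topological-minor containment is transitive, or simply the classical result that a large grid contains many disjoint copies of any fixed planar graph) — one extracts $k+1$ vertex-disjoint copies, contradicting the packing bound. I would state this in the generality needed: fix, for each $H\in\mathcal{H}$ of at most $h$ vertices, a wall $W_{c(h)}$ of which $H$ is a topological minor; the Excluded Grid Theorem gives a function $g$ with $\tw(G)\ge g(k,h)$ implying a $W_{(c(h))\cdot(k+1)}$-topological-minor, which contains $k+1$ disjoint $W_{c(h)}$'s and hence $k+1$ disjoint copies of $H$. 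Combining this dichotomy with the treewidth-bounded case of the previous paragraph yields, for any $G$ of treewidth at most $t$, a cover of size polynomial in $t$ and the packing number — which is precisely the claimed {\sf v}-E{\sf \footnotesize  \&}P property with polynomial gap on the class of graphs of treewidth at most $t$.

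Actually, for the statement as phrased we are \emph{given} that $\tw(G)\le t$, so the win/win is not even needed — the statement reduces purely to the treewidth-bounded case, i.e.\ to the vertex analogue of Theorem~\ref{o0p2kz9}. I would therefore present the proof as: set $\mathcal{H}$-expansions to be topological-minor expansions, run the induction on $k=\pack_{\mathcal{H}}(G)$ using a minimum-height infected bag node, delete the bag plus $O((t+1)\cdot\max_{H\in\mathcal{H},\,|V(H)|\le?}|E(H)|)$ many vertices chosen inside the subtree, apply the vertex-deletion version of Observation~\ref{obd4}, and conclude $\cover_{\mathcal{H}}(G)\le \mathrm{poly}(t)\cdot\pack_{\mathcal{H}}(G)$.

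The main obstacle I anticipate is the bookkeeping in the inductive step: one must verify that deleting the bounded vertex set $S$ really does reduce the packing number, which requires the ``any expansion touching a child subtree must hit the current bag'' claim — this in turn uses both the minimality of the height of the infected node and the connectivity of the expansion (so that it cannot live entirely below without also reaching $X_{t^\star}$), exactly as in the proof of Theorem~\ref{o0p2kz9}. The other mild subtlety is that $\mathcal{H}$ may be an infinite class with no uniform bound on $|E(H)|$, so the ``$\mathrm{poly}(t)$'' gap should be read as: the number of components of $G\setminus X_{t^\star}$ an expansion of $H$ can meet is at most $(t+1)|E(H)|\le (t+1)\binom{t+1}{2}$, since any expansion of a graph with more than $\binom{t+1}{2}$ edges cannot embed in a graph of treewidth $t$ at all — so the relevant $H$'s have boundedly many edges and the gap is genuinely a polynomial in $t$ alone. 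Everything else is a routine transcription of the edge-case machinery into the treewidth/topological-minor setting.
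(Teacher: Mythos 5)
The paper does not prove this proposition; immediately after the statement it is attributed to Thomassen~\cite{Thomassen88onth} (with~\cite{FominST11stre,ChekuriC13larg} as further references). Your proposal reconstructs Thomassen's classical argument --- induct on the packing number, root a width-$t$ tree decomposition, pick the lowest node $t^\star$ whose subtree $G_{t^\star}$ contains a member of $\mathcal{H}$, and charge a bounded vertex set against it --- and this is indeed the correct strategy, so on the substance you are fine.

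However, you overcomplicate the inductive step by transcribing the edge argument of Theorem~\ref{o0p2kz9} too literally. In a tree decomposition the bag $X_{t^\star}$ already separates $G_{t^\star}\setminus X_{t^\star}$ from the rest of $G$. Since $\mathcal{H}$ consists of connected graphs and $t^\star$ has minimum height, no subgraph of $G\setminus X_{t^\star}$ isomorphic to a member of $\mathcal{H}$ can meet $G_{t^\star}\setminus X_{t^\star}$ at all: by connectivity it would have to lie inside $G_{t'}$ for some child $t'$ of $t^\star$, contradicting the minimality of $t^\star$. Hence every member of $\mathcal{H}$ surviving in $G\setminus X_{t^\star}$ is automatically vertex-disjoint from the witness $M\subseteq G_{t^\star}$, so deleting only the at most $t+1$ vertices of $X_{t^\star}$ already drops the packing number by one, and the induction closes with a cover of size at most $(t+1)k$ --- a gap \emph{linear} in $t$. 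The extra $O(\mathrm{poly}(t))$ vertices ``to kill the subtree'', the Lemma~\ref{intersect}-style component count, and the detour about bounding $\max_{H\in\mathcal{H}}|E(H)|$ are all unnecessary: the argument never needs to count edges of $H$, which is precisely why the proposition holds uniformly for an arbitrary class of connected graphs. You are also right that the grid-theorem win/win is irrelevant here, since bounded treewidth is part of the hypothesis. One small mismatch to fix: the proposition concerns a general class $\mathcal{H}$ of connected graphs, not specifically topological-minor or immersion expansions; your argument works verbatim at that level of generality, so it should be phrased that way.
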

\begin{lemma}
\label{y92smlx1}
Let $G$ be a graph and let $H$ be a connected planar  graph on 
$h$ vertices and without any ${\cal I}(H)$-vertex packing of size greater than $k$.
Then $\tw(G)=(h\cdot k)^{O(1)}$.
\end{lemma}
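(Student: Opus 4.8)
The plan is to mimic, on the vertex side, the argument of Lemma~\ref{y92ml1}, replacing tree-cut width by treewidth, immersions by topological minors, and walls by grids. First I would recall the Excluded Grid Theorem in its polynomial form (Chuzhoy--Tan, or the version cited in the excerpt): there is a function $\gamma(r)=r^{O(1)}$ such that every graph $G$ with $\tw(G)\geq \gamma(r)$ contains the $(r\times r)$-grid $\Gamma_r$ as a minor, hence (since grids have maximum degree at most $4$, but in fact one can take a wall-type subgraph of maximum degree $3$) as a topological minor. Second, I would invoke Lemma~\ref{aergae} of Kant: every simple planar subcubic graph on $n$ vertices is a topological minor of $\Gamma_{\lfloor n/2\rfloor}$; more to the point, for a general connected planar graph $H$ on $h$ vertices, after subdividing multiedges we still get a simple planar graph $H'$ on $O(h)$ vertices, and there is a standard fact (again going back to planar-graph drawings on grids, e.g.\ via Valiant/Leighton-style embeddings) that every simple planar graph on $n$ vertices is a topological minor of the $(c\cdot n)\times(c\cdot n)$-grid for an absolute constant $c$. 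So $H$ is a topological minor of $\Gamma_{O(h)}$.

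The core step is the packing-to-grid disjointness argument. Set $k=\pack_{{\cal I}(H)}(G)$. Suppose $\tw(G)\geq \gamma\big(O(h)\cdot\lceil\sqrt{k+1}\,\rceil\big)$. Then $G$ contains $\Gamma_{O(h)\cdot\lceil\sqrt{k+1}\,\rceil}$ as a topological minor, and this big grid tiles into $k+1$ pairwise vertex-disjoint copies of $\Gamma_{O(h)}$ (by splitting each coordinate into $\lceil\sqrt{k+1}\,\rceil$ blocks of length $O(h)$). Each such copy contains $H$ as a topological minor, hence contains an $H$-topological-minor expansion, which — by the last observation in the excerpt, namely that a topological-minor model is a strong-immersion model — is in particular an ${\cal I}(H)$-vertex model. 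Since the grid copies are vertex-disjoint, so are these $k+1$ expansions, contradicting $\pack_{{\cal I}(H)}(G)\leq k$. Therefore $\tw(G)\leq \gamma\big(O(h)\cdot\lceil\sqrt{k+1}\,\rceil\big)$, and since $\gamma$ is polynomial this is $(h\cdot k)^{O(1)}$, as claimed.

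A subtlety I would be careful about is how the big grid actually splits into vertex-disjoint small grids: a $(ab)\times(ab)$-grid contains $a^2$ vertex-disjoint $b\times b$-grids (delete one row and one column between consecutive blocks, or simply take $\lfloor \cdot\rfloor$ and work with slightly larger blocks), so choosing the side length as $(O(h)+1)\cdot\lceil\sqrt{k+1}\,\rceil$ with a small additive slack suffices; this only changes the hidden constants. I would also note that since ${\cal I}(H)$ is monotone under taking supergraphs, it is enough that each small grid contains \emph{some} graph of ${\cal I}(H)$ as a subgraph, which is exactly an $H$-topological-minor expansion.

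The main obstacle is not conceptual but bibliographic/technical: one needs the precise statement that an arbitrary connected planar (not necessarily subcubic) graph $H$ on $h$ vertices is a topological minor of a grid of side $O(h)$, with an explicit polynomial (indeed linear) dependence. Lemma~\ref{aergae} as quoted handles only the \emph{subcubic} case, so I would either (i) first reduce $H$ to a subcubic planar graph $H''$ on $O(h)$ vertices by locally replacing each vertex of degree $d\geq 4$ by a small planar gadget (a path or tree of degree-$3$ vertices) such that $H$ is a topological minor of $H''$ — this is routine and keeps the graph planar and of size $O(h)$ — and then apply Lemma~\ref{aergae} to $H''$; or (ii) cite a direct planar-to-grid embedding result. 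Option (i) keeps the paper self-contained given the tools already in the excerpt, so that is the route I would take, and the rest is the straightforward disjointness counting above.
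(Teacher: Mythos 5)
For the subcubic case your argument tracks the paper's: polynomial grid/wall exclusion for treewidth, tiling the large wall into $k+1$ vertex-disjoint walls of side proportional to $h$, embedding $H$ topologically into each via Lemma~\ref{lem:subcubicplanar} (in its topological-minor form), and noting that a topological-minor model is a strong-immersion model. The only cosmetic difference is that the paper works with walls (maximum degree $3$) where you phrase things with grids (maximum degree $4$); one really must pass to walls, since the polynomial excluded-grid theorem delivers a grid \emph{minor}, from which one extracts a wall subdivision but not a grid subdivision — you flag this parenthetically, so the substance is fine.

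The trouble is in your attempt to cover non-subcubic planar $H$. The ``standard fact'' you invoke — that every simple planar graph on $n$ vertices is a topological minor of an $O(n)\times O(n)$-grid — is false: a topological minor of a grid has maximum degree at most $4$, so no planar graph with a degree-$5$ vertex (e.g.\ $K_{1,5}$) is a topological minor of any grid. The Valiant/Leighton-type embeddings give ordinary minors, not topological ones. Your fallback~(i) fails for a symmetric reason: if $H''$ is obtained from $H$ by expanding a degree-$d$ vertex ($d\geq 4$) into a subcubic tree, then $H$ is a minor of $H''$ but \emph{not} a topological minor of $H''$ (and not an immersion of $H''$ either, since several of the $d$ paths would have to traverse the same tree edge, which would need multiplicity). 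In fact the statement as printed is false for non-subcubic $H$: take $H = K_{1,5}$ and $G = \Gamma_r$. Then $\pack_{\mathcal{I}(H)}(G)=0$ for every $r$ — a simple grid has no vertex of multidegree $\geq 5$, and Observation~\ref{branches} forbids an immersion model from lowering multidegree at branch vertices — while $\tw(\Gamma_r)=r$ is unbounded. The hypothesis should read ``connected planar \emph{subcubic} graph'', consistent with Lemma~\ref{y92ml1}, with the invocation of Lemma~\ref{lem:subcubicplanar}, and with the use of this lemma to derive the vertex case of Theorem~\ref{main4u}; once that correction is made, your argument is precisely the paper's.
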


Proposition~\ref{ropjkg}  was 
proven by Thomassen in~\cite{Thomassen88onth} (see also~\cite{FominST11stre,ChekuriC13larg}).
For Lemma~\ref{y92smlx1}, we need the fact that there 
is a polynomial function $\lambda:\N^{+}\rightarrow\N$ such that for 
every $r\in \N^{+}$, every graph with treewidth at  least $\lambda(r)$ contains 
$W_{r}$ as a topological minor. The existence of such a function $\lambda$
follows from the {\em grid exclusion theorem}
of Robertson and Seymour in~\cite{RobertsonS86GMV} (see also~\cite{RobertsonST94,Diestel05grap}) and 
the polynomiality of $\lambda$ was proved recently by Chekuri and Chuzhoy in~\cite{ChekuriC13poly} (see also~\cite{Chuzhoy15grid,Chuzhoy16impr} for improvements).
Then~Lemma~\ref{y92smlx1} can be proved using the same arguments as in Lemma~\ref{y92ml1},
taking 
into account that~Lemma~\ref{lem:subcubicplanar} also holds if we consider 
topological minors instead of immersions and the fact that a
topological minor model is also an immersion model.

\noindent The vertex version of Theorem~\ref{main4u} follows from~Proposition~\ref{ropjkg} and Lemma~\ref{y92smlx1} if, in Proposition~\ref{ropjkg}, we set ${\cal H}={\cal I}(H)$ and $t=(h\cdot k)^{O(1)}$.

\section{Discussion}
\label{d0opo45iuyt}

Notice that in~Theorem~\ref{main4u} we demand that $H$ is a connected graph. It is easy to extend this result
if instead of $H$ we consider some 
collection ${\cal H}$ is  of connected graphs containing one that is planar subcubic and where ${\cal I}(H)$
contains all graphs containing some graph in ${\cal H}$ as an immersion.
Moreover, it is easy 
to drop the connectivity condition for the vertex variant using arguments from~\cite{RobertsonS86GMV}.
However it remains open whether this can be done for the edge variant as well.

Naturally, the most challenging 
problem on the Erdős–Pósa properties of immersions
is to characterize the graph classes:

$${\cal H}^{\sf v/e}=\{H\mid {\cal I}(H)\mbox{\ has the {\sf v/e}-E{\sf \footnotesize  \&}P property}\}$$

\noindent In this paper we prove that both  ${\cal H}^{\sf v}$  and  ${\cal H}^{\sf e}$ contain all planar subcubic graphs.
It is an interesting question  whether ${\cal H}^{\sf v/e}$ are wider than this.
Using arguments similar to~\cite{Raymond2013edge,RobertsonS86GMV} it is possible to 
prove the following. 

\begin{lemma}\label{to094rhfk}
None of ${\cal H}^{\sf v}$ and ${\cal H}^{\sf e}$ contains a non-planar  subcubic graph.
\end{lemma}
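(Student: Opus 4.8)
The plan is to exhibit, for each non-planar subcubic graph $H$, an infinite family of graphs witnessing the failure of the Erdős–Pósa property for ${\cal I}(H)$, in both the vertex and the edge version. The natural source of such counterexamples is the classical construction used by Robertson and Seymour to show that excluding a non-planar graph as a minor does not have the Erdős–Pósa property, adapted to immersions as in Raymond's thesis~\cite{Raymond2013edge}. The key point that makes $H$ non-planar useful is that a graph $H$-immersion-expansion, being a subcubic subgraph (by Observation~\ref{branches}, branch vertices keep multidegree at most $3$ when $H$ is subcubic), must ``live'' on a surface of positive genus whenever $H$ is non-planar; more precisely, an $H$-immersion in $G$ forces a structure in $G$ that cannot be hosted in a planar (or bounded-genus) piece.

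Concretely, the plan is the following. First I would fix a surface $\Sigma$ of sufficiently high Euler genus, say large enough that $H$ embeds in $\Sigma$ but $H$ does not embed in any surface of smaller genus, and take $G_n$ to be a graph embeddable on $\Sigma$ with large representativity (face-width) $n$ — for instance a suitably large toroidal-type grid on $\Sigma$ reinforced so that it does contain one copy of (a subdivision/expansion of) $H$. The two facts to establish are: (i) $\pack_{{\cal I}(H)}(G_n)\le 1$ (or at least bounded by an absolute constant), i.e.\ one cannot find two vertex-disjoint, hence also two edge-disjoint, $H$-immersion-expansions in $G_n$; and (ii) $\cover_{{\cal I}(H)}(G_n)\to\infty$, i.e.\ one must delete an unbounded number of vertices (a fortiori edges) to destroy all $H$-immersions. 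For (i), the argument is topological: two disjoint $H$-expansions on $\Sigma$ would, by a counting/homology argument on the genus, force the host graph to embed only in a surface of genus at least twice that of the minimal genus of $H$, contradicting that $G_n$ embeds in $\Sigma$; this is exactly the ``genus is superadditive under disjoint union of non-planar pieces'' phenomenon. For (ii), one shows that deleting few vertices (or edges) from a highly representative embedding on $\Sigma$ leaves a graph that still embeds on $\Sigma$ with positive representativity and still contains $H$ as an immersion — so no small cover exists; here one uses that an $H$-topological-minor/immersion model persists in large grids on $\Sigma$ (via Lemma~\ref{lem:subcubicplanar}-type reasoning combined with the fact that $H$ embeds on $\Sigma$).

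Since the statement only asks to rule out non-planar subcubic $H$ from both ${\cal H}^{\sf v}$ and ${\cal H}^{\sf e}$, and since a vertex-disjoint packing is also edge-disjoint while an edge-cover of size $s$ yields a vertex-cover of size $s$ (take one endpoint per edge), it suffices to prove the two bounds (i) and (ii) in the stronger ``few vertices'' sense: bounded vertex-packing and unbounded vertex-cover. That single construction then simultaneously gives $H\notin{\cal H}^{\sf v}$ and, because an edge-packing and an edge-cover are squeezed between the vertex versions up to constant factors (here $H$ has bounded size, so the conversions cost only $O(|E(H)|)$), also $H\notin{\cal H}^{\sf e}$. I would therefore organize the proof as: (1) recall/define the genus of a graph and state superadditivity of Euler genus over blocks; (2) choose $G_n$ as a large $\Sigma$-grid containing one $H$-expansion; (3) prove the packing bound via superadditivity; (4) prove the covering lower bound via robustness of high-representativity $\Sigma$-embeddings under small deletions; (5) assemble and transfer to the edge setting.

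The main obstacle I expect is step (4), the covering lower bound: one must argue that \emph{every} set $F$ of $o(n)$ vertices fails to be a cover, which requires showing that the ``punctured'' graph $G_n\setminus F$ still contains an $H$-immersion. This needs a clean statement that a $\Sigma$-embedded grid of representativity $n$ minus $o(n)$ vertices still contains a $\Sigma$-embedded grid of representativity growing with $n$ — an analogue, on surfaces, of the fact that deleting few vertices from a large planar grid leaves a large planar grid — and then an appeal to (a surface version of) Lemma~\ref{lem:subcubicplanar} to locate $H$ inside it. Making the dependence between ``few'' and ``still large'' explicit, and checking that the surviving grid genuinely hosts an $H$-immersion-expansion (not merely a minor), is where the real work lies; the packing side, by contrast, is a short topological counting argument once superadditivity of genus is invoked.
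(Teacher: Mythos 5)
The paper does not in fact supply a proof of Lemma~\ref{to094rhfk}; it only asserts that one follows ``using arguments similar to'' \cite{Raymond2013edge,RobertsonS86GMV}, which is indeed the surface-embedded-grid and genus construction you are trying to reconstruct, so your high-level plan is consistent with what the authors have in mind. The genuine problem is in the step where you reduce the edge statement to the vertex statement, and it is not fixable by tightening constants.

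Your claim is that proving ``bounded vertex-packing and unbounded vertex-cover'' is the stronger statement and automatically yields the edge case, because ``edge-packing and edge-cover are squeezed between the vertex versions up to $O(|E(H)|)$ factors.'' This is backwards. Since vertex-disjoint subgraphs are a fortiori edge-disjoint, one always has $\nu^{\sf v}\le\nu^{\sf e}$; and an edge-cover yields a vertex-cover by picking one endpoint per edge, so $\tau^{\sf v}\le\tau^{\sf e}$. Hence the single statement that implies both $H\notin{\cal H}^{\sf v}$ and $H\notin{\cal H}^{\sf e}$ is \emph{bounded edge packing} together with \emph{unbounded vertex cover} --- not the pair you propose. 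Moreover, the asserted $O(|E(H)|)$ comparison between $\nu^{\sf e}$ and $\nu^{\sf v}$ is simply false: already for $H=K_2$ in the host graph $K_{1,n}$ the edge packing is $n$ while the vertex packing is $1$, so no function of $H$ alone controls $\nu^{\sf e}/\nu^{\sf v}$. Concretely, your packing bound via superadditivity of Euler genus over blocks is clean for \emph{vertex-disjoint} $H$-expansions (they land in distinct blocks, so genera add), but edge-disjoint expansions can share vertices, need not lie in distinct blocks, and the block-superadditivity argument no longer applies. Bounding the \emph{edge} packing is exactly where the cited constructions do the extra work --- for instance by routing every $H$-expansion through a thin ``handle'' carrying only $O(1)$ edges in total --- and your sketch skips this.

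One more, smaller, inaccuracy: an $H$-immersion expansion is not in general a ``subcubic subgraph.'' Observation~\ref{branches} controls the multidegree only at branch vertices (and in the wrong direction from the way you phrase it); a non-branch vertex of the expansion may be traversed by many certifying paths, since these are only edge-disjoint, and so can have arbitrarily high degree inside the expansion. This does not by itself break the topological argument, but the justification you give for it is off and should not be leaned on.
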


 \begin{figure}[h]
   \centering
 \centering
\scalebox{0.2}{\includegraphics{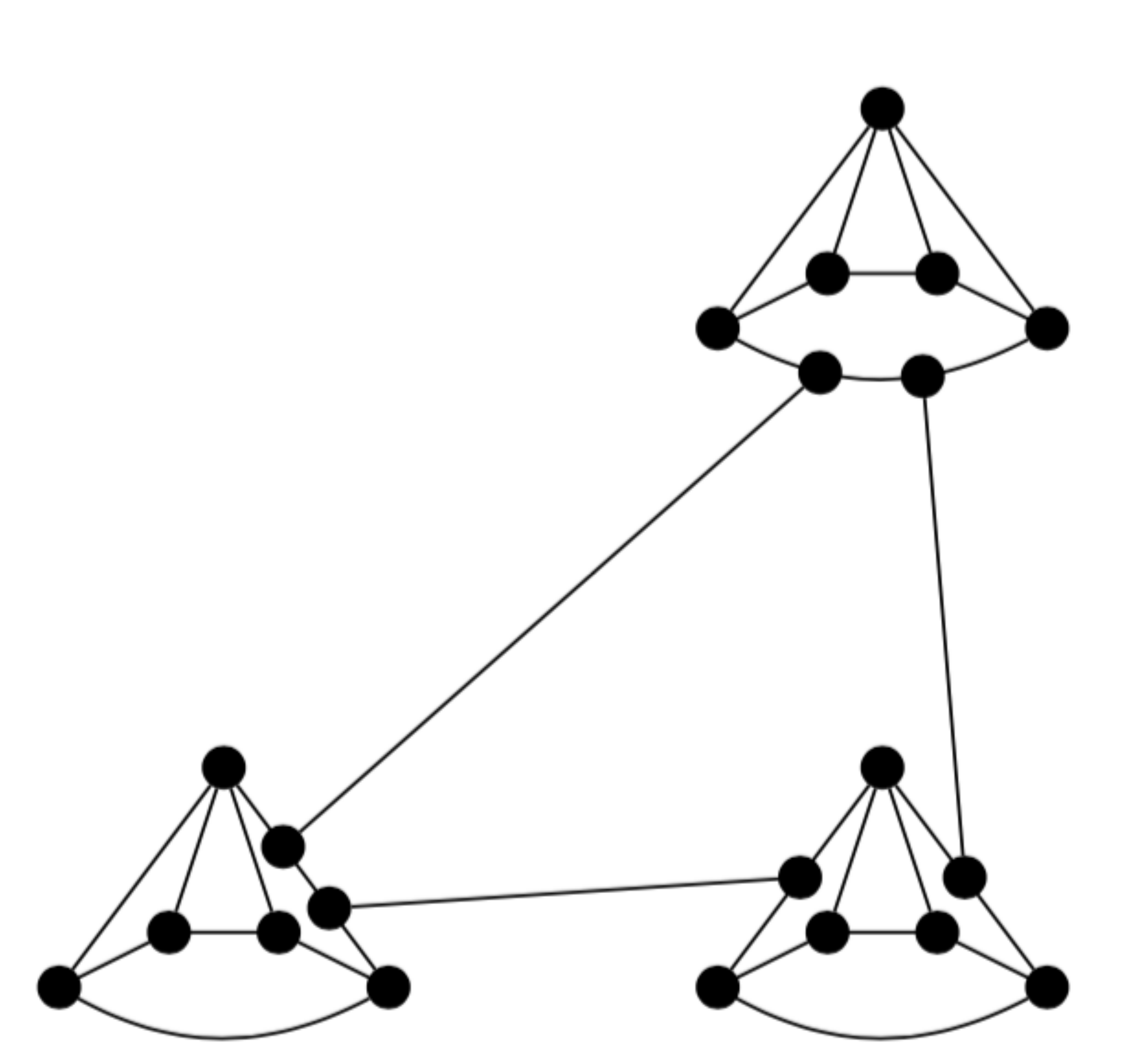}}
   \caption{A biconnected graph $H$ for which
${\cal I}(H)$ does not have  the {\sf v/e}-E{\sf \footnotesize  \&}P property.}
   \label{fig:ce}
 \end{figure}

Actually, the arguments of~\cite{Raymond2013edge,RobertsonS86GMV} permit
to exclude all non-planar graphs from  ${\cal H}^{\sf v}$.
For the non-subcubic case, we can first observe that $K_{1,4}$, which is planar and non-subcubic belongs 
in both  ${\cal H}^{\sf v}$  and   ${\cal H}^{\sf e}$. However, this is not the case for all  planar and non-subcubic
graphs as is indicated in the following  observation.

\begin{observation}\label{tso094rshfk}
There exists  a 3-connected  non-subcubic graph $H$ that  does belong neither to ${\cal H}^{\sf v}$  nor  to  ${\cal H}^{\sf e}$.\end{observation}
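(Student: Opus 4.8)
The plan is to exhibit a concrete 3-connected planar graph $H$ with a vertex of degree at least $4$ and to show that ${\cal I}(H)$ fails the Erd\H{o}s--P\'osa property --- for both the vertex and the edge variants simultaneously --- by the standard Euler-genus/highly-connected-host argument in the spirit of~\cite{RobertsonS86GMV, Raymond2013edge}. A good candidate is the graph $H$ depicted in Figure~\ref{fig:ce} (a biconnected, indeed $3$-connected, non-subcubic planar graph); intuitively one wants a planar $H$ all of whose ``$H$-immersion expansions'' in a host are forced to use large cuts, so that a single well-chosen host graph contains no two edge-disjoint (hence no two vertex-disjoint) copies yet has no bounded-size hitting set. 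First I would fix $H$ and record its relevant parameters: it is planar, $3$-connected, and has minimum degree/maximum degree behaviour that forces every $H$-immersion model to place branch vertices at vertices of multidegree $\ge 4$ (using Observation~\ref{branches}).

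The core construction is a family $\{G_n\}_{n\in\N}$ of host graphs, each of which embeds in a surface of small Euler genus (or is built from a large grid/wall-like gadget) in such a way that (i) $G_n$ contains $H$ as an immersion, so $\cover_{{\cal I}(H)}(G_n)\ge 1$, in fact an $H$-immersion expansion survives the deletion of any bounded set of vertices/edges, forcing $\cover_{{\cal I}(H)}(G_n)\to\infty$; yet (ii) any two $H$-immersion expansions in $G_n$ must share a vertex (equivalently an edge), so $\pack_{{\cal I}(H)}(G_n)=1$. Step (i) is typically the ``robustness'' half: one shows that $G_n$ has many internally-disjoint routing structures so that a bounded deletion leaves an intact copy of $H$. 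Step (ii) is the genus/topological obstruction half: here one argues that an $H$-immersion expansion in $G_n$ must carry a non-contractible structure (a copy of some non-planar or high-genus pattern that $H$ forces through its $3$-connectivity), and a surface of bounded genus cannot accommodate two disjoint such structures; this is exactly where the classical argument of~\cite{RobertsonS86GMV} that non-planar graphs fail Erd\H{o}s--P\'osa gets adapted, except that here the relevant ``pattern'' is created by the non-subcubic vertex of $H$ being routed across the host. I would make (ii) precise by an Euler-formula counting argument: each $H$-expansion forces a prescribed deficiency, and two disjoint ones would force genus exceeding that of the surface hosting $G_n$.

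To get both variants at once it suffices to prove the edge statement and observe that edge-disjoint $H$-immersion expansions are in particular obtained from vertex-disjoint ones, so the same $\{G_n\}$ witnesses failure of the vertex Erd\H{o}s--P\'osa property too (a vertex-packing of size $k$ yields an edge-packing of size $k$; and a vertex-cover of size $f(k)$ yields an edge-cover of size $O(f(k))$ by taking incident edges, so unboundedness transfers). Concretely: assuming ${\cal I}(H)$ had the {\sf v}-E{\sf\footnotesize \&}P property with gap $f$, apply it to $G_n$ with $k=1$; since $\pack_{{\cal I}(H)}(G_n)=1$ we would get a vertex-cover of size $f(1)$, contradicting $\cover_{{\cal I}(H)}(G_n)\to\infty$ from (i); the same works for {\sf e} with the cover constructed in (ii).

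The main obstacle I expect is step (ii): pinning down the precise topological invariant of $H$ (arising from its non-subcubic, $3$-connected structure) that is ``additive'' enough across disjoint expansions to be capped by the bounded genus of the host, and ensuring the particular $H$ in Figure~\ref{fig:ce} really does force this --- i.e.\ verifying that no clever immersion routing can ``cheat'' the cut/genus lower bound by reusing edges at the degree-$\ge 4$ branch vertex. Making the robustness in (i) survive deletions of both a few \emph{vertices} and a few \emph{edges} uniformly is routine by comparison but must be stated carefully so that a single family $\{G_n\}$ does double duty.
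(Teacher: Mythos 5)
Your proposal takes a genuinely different route from the paper, and the difference is fatal. You propose to derive the failure of Erd\H{o}s--P\'osa from an Euler-genus obstruction: each $H$-immersion expansion ``carries a non-contractible structure,'' so a bounded-genus host cannot accommodate two disjoint copies, while being robust enough to need a large cover. That is exactly the mechanism used to exclude \emph{non-planar} $H$ (Lemma~\ref{to094rhfk}, citing \cite{Raymond2013edge,RobertsonS86GMV}). But the $H$ of Observation~\ref{tso094rshfk} is \emph{planar}. An immersion expansion of a planar graph need not carry any non-contractible curve, and the ``deficiency'' you hope to sum across disjoint copies is identically zero; in fact the paper's host graph is itself planar. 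So your step~(ii) has no content for this $H$: there is no topological invariant of a planar pattern that caps the number of disjoint copies in a bounded-genus host, and you correctly singled out exactly this step as the risky one.

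What the paper actually does is adapt Thomassen's tree counterexample~\cite{Thomassen88onth}. Starting from the biconnected (not $3$-connected as you assert; the paper obtains $3$-connectivity only after a later modification) planar non-subcubic graph $H$ of Figure~\ref{fig:ce}, the host $G$ of Figure~\ref{fig:host} is a wall with three triples of gadgets attached on three sides, each triple consisting of three copies of one $3$-connected piece of $H$. The layout and routing capacity of the wall force any two $H$-immersion expansions in $G$ to share an edge/vertex, so $\pack_{{\cal I}(H)}(G)=1$, while the threefold redundancy of the gadgets means any cover needs at least $3$ vertices/edges; scaling up the wall makes $\cover$ grow unboundedly while $\pack$ stays $1$. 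Then a new vertex adjacent to the three degree-$4$ vertices of $H$ is added to make $H$ $3$-connected while staying planar, and the host is adjusted accordingly. The obstruction is thus a combinatorial routing/capacity argument in a planar host, not a genus argument; to repair your proposal you would have to replace step~(ii) wholesale with such a construction rather than just ``verify'' it.
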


%

\begin{figure}[h]
  \centering
\scalebox{0.2}{\includegraphics{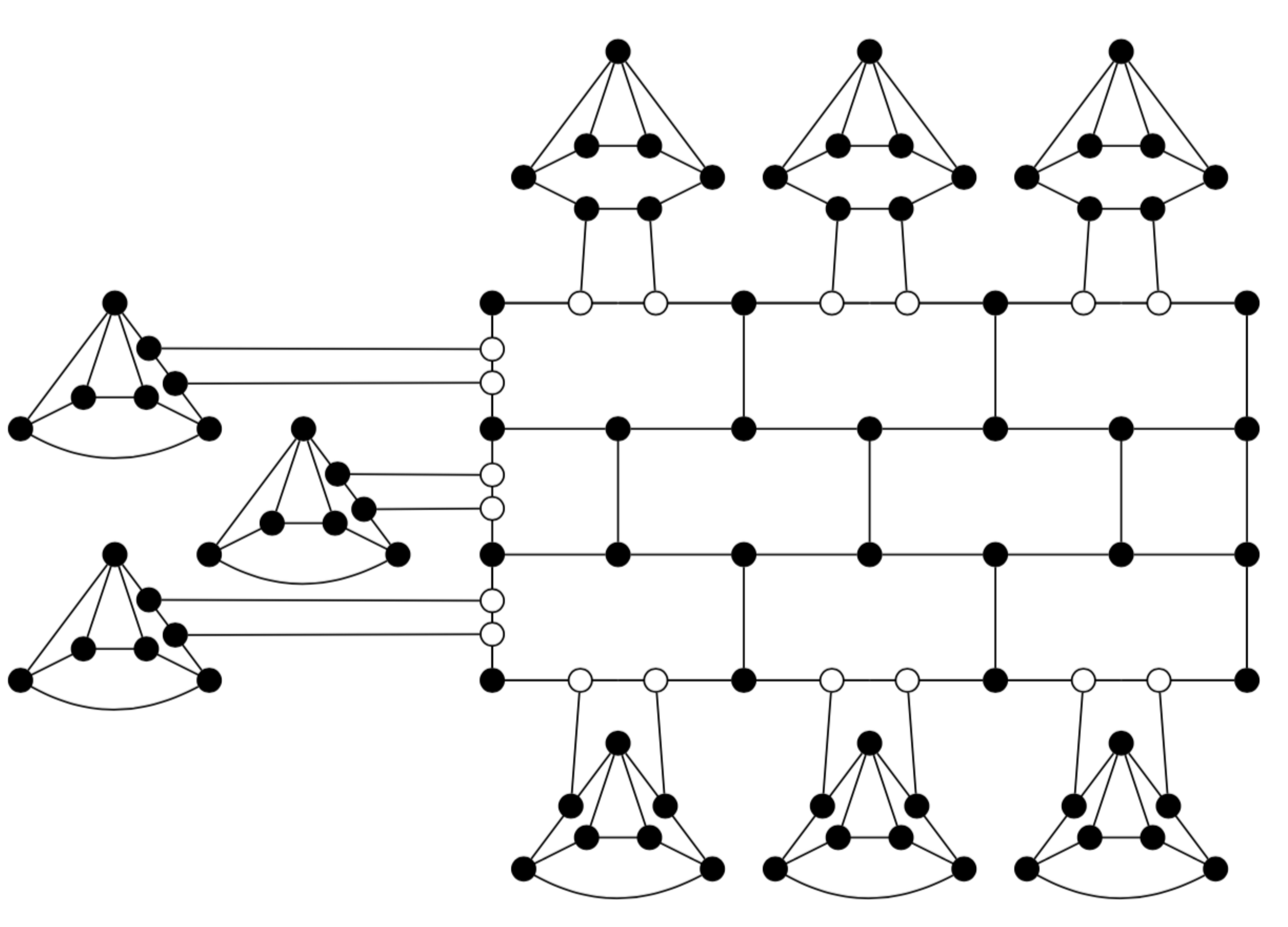}}
  \caption{The host graph $G$.}
  \label{fig:host}
\end{figure}

\begin{proof}
Thomassen in~\cite{Thomassen88onth} provided an example of a tree
that does not belong in  ${\cal H}^{\sf v}$ (the same graph
does not belong in  ${\cal H}^{\sf e}$ either). 
Inspired by the construction of~\cite{Thomassen88onth}, we consider first the
graph  $H$ is  
depicted  in Figure~\ref{fig:ce}.
To see that $H\not\in {\cal H}^{\sf v}$ and $H\not\in {\cal H}^{\sf e}$,  consider as host graph $G$ the graph 
in Figure~\ref{fig:host}. This graph consists of a main body that is a wall of height 3 and three 
triples of graphs attached at its upper, leftmost, and lower paths.  Each of these triples consists 
of three copies of some of the 3-connected components of $H$.
Notice that $G$ does not contain more than one {$H$-immersion expansion}.
However,  in order to cover all $H$-immersion expansions of $G$ one needs to remove at least 3 edges/vertices.
By increasing the heigh of the wall of $G$, we may increase the minimum size of 
an ${\cal I}(H)$-vertex/edge cover while no ${\cal I}(H)$-vertex/edge 
packing of size greater than $1$ will appear. It is easy to modify $H$ so to make
it 3-connected: just add a new vertex and make it adjacent 
with the tree vertices of degree 4. The resulting graph $H'$ remains 
planar. The same arguments, applied to an easy modification of the host graph, can prove that $H'$ 
is not a graph in ${\cal H}^{\sf v}$ or ${\cal H}^{\sf e}$.
\end{proof}

Providing an exact characterization of ${\cal H}^{\sf v}$  and  ${\cal H}^{\sf e}$ is an insisting open problem.
A first step to deal with this problem  could be  the cases of $\theta_{4}=\tikz[every node/.style={black node, minimum size=4pt},rotate=30, scale =0.3]{
  \node (x) at (0,0) {};
  \node (y) at (2,0) {};
  \draw (x) .. controls (1, 0.75) and (1, 0.75) .. (y);
  \draw (x) .. controls (1, -0.75) and (1, -0.75) .. (y);
  \draw (x) .. controls (1, .25) and (1, .25) .. (y);
  \draw (x) .. controls (1, -.25) and (1, -.25) .. (y);}$ and the 4-wheel $\tikz[every node/.style={black node, minimum size=4pt}, rotate = 45, scale =0.25]{
  \node (x) at (0,0) {};
  \draw (1,0) node (y1) {} -- (0,1) node (y2) {} -- (-1,0) node (y3) {} -- (0,-1) node (y4) {} -- cycle;
  \draw (x) -- (y1) (x) -- (y2) (x) -- (y3) (x) -- (y4);}$. Especially for the 4-wheel,
the structural results in~\cite{BelmonteGLT2016thes} might be useful in this direction.

%

\end{document}